\newtheorem{theorem}{Theorem}[section]
\newtheorem{lemma}[theorem]{Lemma}
\newtheorem{prop}[theorem]{Proposition}
\theoremstyle{definition}
\def\eproof{$\Box$ \medskip}
\newcommand\rs{\hat{\mathbb{C}}}
\newcommand\R{\mathbb{R}}
\newcommand\Ht{\mathbb{H}^3}
\newcommand\Hp{\mathbb{H}^2}
\title{Renormalized volume and the volume of the convex core}
\author{Martin Bridgeman}
\address{Boston College}
\author{Richard D. Canary}
\address{University of Michigan}
\date{\today}
\thanks{Bridgeman was partially supported by NSF grant DMS-1500545. Canary was partially supported by NSF grant, DMS -1306992.}
\begin{document}
\begin{abstract} 
We obtain upper and lower bounds on the difference between  the
renormalized volume  and the volume of the convex core of a convex cocompact
hyperbolic 3-manifold which depend on the injectivity radius of the boundary of
the universal cover of the convex core and the Euler characteristic of the boundary
of the convex core.  
These results generalize results of Schlenker obtained in
the setting of quasifuchsian hyperbolic 3-manifolds.
\end{abstract}

\maketitle

\section{Introduction}
Krasnov and Schlenker \cite{KS08,KS12} studied the
renormalized volume of a convex cocompact hyperbolic 3-manifold. 
Renormalized volume was introduced in the  more general setting of infinite volume conformally compact Einstein manifolds
as a way to assign a finite normalized volume in a natural way (see Graham-Witten \cite{GW}). 
Krasnov and Schlenker's  renormalized volume generalizes earlier work of Krasnov \cite{Khol}
and Takhtajan-Teo \cite{TT} for special classes of hyperbolic 3-manifolds.
In particular, it is closely related to the Liouville action functional studied by Tahktajan-Teo \cite{TT}
and the renormalized volume gives rise to a K\"ahler potential for the Weil-Petersson metric
(see Krasnov-Schlenker \cite[Section 8]{KS08}).

Schlenker  \cite{schlenker-qfvolume} showed that there exists $K>0$ such that
if $M$ is a quasifuchsian hyperbolic 3-manifold, then
$$V_C(M) -K|\chi(\partial M)| \leq V_R(M) \leq V_C(M)$$
where $V_R(M)$ is the renormalized volume of $M$ and $V_C(M)$ is the volume of the
convex core $C(M)$ of $M$.
This inequality, along with a variational  formula for the renormalized volume,
was used by Kojima-McShane \cite{KMc} and Brock-Bromberg \cite{brock-bromberg}
to give an upper bound on the volume of a hyperbolic
3-manifold fibering over the circle in terms of the entropy of its monodromy map.

In this paper, we use the  work of the authors 
\cite{bridgeman,BC03,BC05,BC10,cbbc} to generalize Schlenker's result to the
setting of all convex cocompact hyperbolic \hbox{3-manifolds}. We exhibit bounds on the difference between $V_C(M)$
and $V_R(M)$  in terms of the injectivity radius of the boundary
of the universal cover of the convex core and the Euler characteristic of the boundary of the convex core.
We will see that, even if $|\chi(\partial C(M))|$ is bounded, this difference can be arbitrarily large.

The {\em convex core} $C(M)$ of a complete hyperbolic 3-manifold $M$ (with non-abelian fundamental group) 
is the smallest convex submanifold of $M$
whose inclusion into $M$ is a homotopy equivalence. Its boundary $\partial C(M)$ is a hyperbolic
surface in its intrinsic metric (see Epstein-Marden \cite[Theorem II.1.12.1]{EM87} and 
Thurston \cite[Proposition 8.5.1]{ThBook}).
A complete hyperbolic \hbox{3-manifold} $M$  (with non-abelian fundamental group) 
is said to be {\em convex cocompact} if $C(M)$ is compact.

Our results, and their proofs, naturally divide into two cases, depending on
whether the boundary of the convex core is incompressible. We recall
that $\partial C(M)$  is {\em incompressible} if whenever $S$ is a component of
$\partial C(M)$, then $\pi_1(S)$ injects into $\pi_1(M)$. Equivalently, the boundary of the convex core is
incompressible if and only if $\pi_1(M)$ is freely  indecomposable. In particular, if $M$ is a quasifuchsian
hyperbolic 3-manifold, the boundary of its convex core is incompressible.
In this case, we get the following generalization of Schlenker's result. 

\begin{theorem}
\label{main incomp}
If $M=\mathbb H^3/\Gamma$ is a convex cocompact hyperbolic \hbox{3-manifold} and
$\partial C(M)$ is incompressible, then
$$V_C(M) - 6.89 |\chi(\partial C(M))| \leq V_R(M) \le V_C(M).$$
Moreover, $V_R(M)=V_C(M)$ if and only if $\partial C(M)$ is totally geodesic.
\end{theorem}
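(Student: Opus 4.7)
The plan is to exploit Krasnov and Schlenker's $W$-volume formulation of $V_R(M)$, apply it to an equidistant approximation of the convex core, and then invoke the authors' earlier bounds on the length of the bending measured lamination in the incompressible setting.

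Specifically, for any convex compact submanifold $N\subset M$ with smooth, strictly convex boundary whose principal curvatures lie in $(-1,1)$, Krasnov-Schlenker \cite{KS08,KS12} show that
$$V_R(M)\;=\;V(N)\;-\;\frac{1}{4}\int_{\partial N}H\,dA,$$
where $H$ is the mean curvature of $\partial N$. I would apply this to $N_\epsilon$, the closed $\epsilon$-neighborhood of $C(M)$, which has smooth strictly convex boundary for sufficiently small $\epsilon>0$, and pass to the limit $\epsilon\to 0^+$ using the equidistant foliation determined by the dome of $C(M)$. The resulting identity takes the form
$$V_R(M)\;=\;V_C(M)\;-\;\frac{1}{4}\,\ell_M(\beta_M),$$
where $\beta_M$ is the bending measured lamination on $\partial C(M)$: the totally geodesic strata contribute nothing to the mean curvature integral in the limit, while the singular curvature concentrated along the pleats contributes exactly $\ell_M(\beta_M)$. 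The upper bound $V_R(M)\leq V_C(M)$ is then immediate from $\ell_M(\beta_M)\geq 0$, and equality holds precisely when $\beta_M=0$, i.e., when $\partial C(M)$ is totally geodesic, giving the moreover clause.

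For the lower bound, I would combine this identity with the authors' earlier work \cite{bridgeman,BC03}, which provides a universal constant $K$ such that $\ell_M(\beta_M)\leq K\,|\chi(\partial C(M))|$ whenever $\partial C(M)$ is incompressible; the numerical value $9.185$ should correspond to $K/4$. The incompressibility hypothesis is essential here: without it the length of the bending measured lamination can blow up for fixed topological type of $\partial C(M)$, which is exactly why the companion result in the paper (treating the compressible case) must bring in the injectivity radius of the dome. The principal obstacle is the limiting step that produces the identity $V_R(M)=V_C(M)-\tfrac14\ell_M(\beta_M)$: along the bending locus, the principal curvatures of $\partial N_\epsilon$ approach $1$ as $\epsilon\to 0$, so one must track the distributional contribution of the pleats to $\int_{\partial N_\epsilon}H\,dA$ uniformly and identify it with the length of $\beta_M$. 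Once this is in place, the remaining ingredients reduce to direct applications of Krasnov-Schlenker and of Bridgeman's average bending inequality.
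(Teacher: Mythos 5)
There is a genuine gap: the identity you build everything on, $V_R(M)=V_C(M)-\tfrac14 L(\beta_M)$, is false in general. The limit of the $W$-volumes of the equidistant neighborhoods $N_\epsilon$ of the convex core computes the $W$-volume of the \emph{Thurston} (projective) metric $\tau$ on $\partial_cM$, because the surfaces $\partial N_\epsilon$ are the Epstein surfaces for $\tau$, not for the Poincar\'e metric $\rho$. The correct statement (Lemma \ref{bending and Wvolume}, due to Schlenker) is $W(\tau)=V_C(M)-\tfrac14 L(\beta_M)$, whereas the renormalized volume is by definition $W(\rho)$. Since $\rho\le\tau$ pointwise, monotonicity gives $V_R(M)=W(\rho)\le W(\tau)$, so your upper bound and the ``only if'' half of the equality case survive; but your lower bound does not, because you have no control on how far $W(\rho)$ sits \emph{below} $W(\tau)$. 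A symptom of the missing term is your guess that $9.185=K/4$: in fact $K\pi^2/4\approx 7.007$, and the remaining $\pi\log 2\approx 2.178$ in the paper's constant comes exactly from the step you are missing.

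That missing step is where incompressibility is used a second time: when $\partial C(M)$ is incompressible, each component of $\Omega(\Gamma)$ is simply connected, so by Herron--Ma--Minda (Theorem \ref{bilip incomp}) one has $\tau/2\le\rho\le\tau$. Combining the linearity and monotonicity of $W$ (Lemma \ref{monotonicity}) then yields $W(\tau)+\pi\log(2)\chi(\partial M)=W(\tau/2)\le W(\rho)\le W(\tau)$, and only after this does Bridgeman's bound $L(\beta_M)\le K\pi^2|\chi(\partial C(M))|$ close the argument. You should also note that the ``if'' direction of the equality case needs the observation that totally geodesic boundary forces each component of $\Omega(\Gamma)$ to be a round disk, whence $\rho=\tau$ and $W(\rho)=W(\tau)=V_C(M)$; your claimed identity would make this automatic, but once the identity is replaced by the inequality $W(\rho)\le W(\tau)$ this extra point is needed.
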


In Proposition \ref{chi necessary} we construct examples demonstrating the necessity of a
linear dependence on $|\chi(\partial C(M))|$ in Theorem \ref{main incomp}.

If the boundary of the convex core is compressible, then the boundary of the universal
cover $\widetilde{C(M)}$ of the convex core is not simply connected and it is natural to consider its
injectivity radius $\eta$, in its intrinsic metric. Equivalently, $\eta$ is half the length of the shortest
homotopically non-trivial curve in  $\partial C(M)$ which bounds a disk in $C(M)$.

\begin{theorem}
\label{main2}
If $M$ is a convex cocompact hyperbolic 3-manifold,
$\partial C(M)$ is compressible and $\eta > 0$ is the injectivity radius of the intrinsic metric 
on $\partial \widetilde{C(M)}$, then
$$V_C(M) -|\chi(\partial M)|  \left(45\log\left(\frac{1}{\min\{1,\eta\} }\right)+67\right)\leq V_R(M) < V_C(M)$$
Furthermore, if $\eta\le \sinh^{-1}(1)$, then
$$V_R(M) \leq V_C(M) -\pi \log\left(\frac{1}{\eta }\right)-1.79.$$
\end{theorem}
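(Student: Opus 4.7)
The plan is to adapt the framework used for the incompressible case (Theorem \ref{main incomp}) while carefully accounting for the geometric effect of compressible disks on the boundary of the convex core. The basic machinery is the Krasnov--Schlenker identity expressing the renormalized volume as (a limit of) the $W$-volume of a convex equidistant surface $S_r$ at distance $r$ outside the convex core. Letting $r \to 0$ produces an identity relating $V_R(M)$ and $V_C(M)$ with an error term involving the bending lamination on $\partial C(M)$ together with a topological correction from Gauss--Bonnet.

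For the crude upper bound $V_R(M) < V_C(M)$, the monotonicity of the $W$-volume along the equidistant foliation outside the convex core gives $V_R(M) \leq V_C(M)$, and the strict inequality follows because $\partial C(M)$ cannot be totally geodesic when it is compressible.

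For the lower bound on $V_R(M)$, I would split $V_C(M) - V_R(M)$ into a contribution from the mean curvature integral on $S_r$ as $r\to 0$ (equivalently, from the total bending of the bending lamination on $\partial C(M)$) and a topological correction from $|\chi(\partial M)|$. The authors' earlier estimates \cite{bridgeman, BC10, cbbc} bound the total bending on $\partial C(M)$ in the compressible case by a linear function of $|\chi(\partial M)|$ with coefficient depending on $\log(1/\eta)$; this is precisely where the dome injectivity radius $\eta$ enters, as in the compressible case bending can concentrate along short compressing curves. Combining this bending estimate with the $W$-volume formula should yield the stated bound of $|\chi(\partial M)|\bigl(67\log(1/\min\{1,\eta\}) + 45\bigr)$, the logarithm disappearing when $\eta\geq 1$.

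For the sharper upper bound $V_R(M) \leq V_C(M) - \pi\log(1/\eta) - 1.79$ when $\eta \leq \sinh^{-1}(1)$, the strategy is to exploit a short compressing curve $\gamma$ on $\partial C(M)$ of length at most $2\eta$. The standard collar lemma produces an embedded annular collar of $\gamma$ in $\partial C(M)$ of width growing like $\log(1/\eta)$, and the geometry of the dome over this collar contributes at least $\pi\log(1/\eta) + 1.79$ to the deficit $V_C(M) - V_R(M)$; the factor $\pi$ should come from applying Gauss--Bonnet to a compressing disk bounded by $\gamma$, while the assumption $\eta\leq \sinh^{-1}(1)$ guarantees that the relevant collar is embedded so the local computation is valid. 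The main obstacle is producing the explicit numerical constants $67$, $45$, and $1.79$: the logarithmic and linear dependences follow fairly directly from \cite{bridgeman, BC10, cbbc}, but the constants require a careful bookkeeping of the comparison between the bending lamination on $\partial C(M)$, the mean curvature of $S_r$, and the intrinsic hyperbolic geometry of $\partial \widetilde{C(M)}$.
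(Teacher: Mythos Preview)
Your framework has the right general shape but is missing a key ingredient: the distinction between the Thurston metric $\tau$ and the Poincar\'e metric $\rho$ on $\partial_c M$. The equidistant surfaces $S_r$ to the convex core are the Epstein surfaces for $\tau$, not for $\rho$, so letting $r\to 0$ yields $W(\tau)=V_C(M)-\tfrac14 L(\beta_M)$ (Lemma~\ref{bending and Wvolume}), whereas $V_R(M)=W(\rho)$. The paper bridges this gap by combining the pointwise inequality $\rho\le\tau$ with a lower bound $\tau/C(\eta)\le\rho$; the latter comes from Theorem~\ref{bilip} after converting the intrinsic injectivity radius $\eta$ into a Poincar\'e injectivity radius $\nu$ via Proposition~\ref{inj back and forth}. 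Monotonicity and linearity of $W$ (Lemma~\ref{monotonicity}) then give $W(\tau)-\pi|\chi(\partial M)|\log C(\eta)\le W(\rho)\le W(\tau)$, and the lower bound for $V_R(M)$ follows by combining this with the upper bound on $L(\beta_M)$ from Theorem~\ref{bending length CH bound}. In the final estimate both the metric-comparison term and the bending term contribute $\log(1/\eta)$ factors; your outline identifies only the second, and the ``topological correction from Gauss--Bonnet'' you allude to is really the linearity law $W(e^s h)=W(h)-s\pi\chi(\partial M)$, not a separate Gauss--Bonnet argument.

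For the sharper upper bound when $\eta\le\sinh^{-1}(1)$, the paper does not carry out a direct collar/dome computation or apply Gauss--Bonnet to a compressing disk. It simply invokes the lower bound $L(\beta_M)\ge 4\pi\log\bigl(2\sinh^{-1}(1)/\eta\bigr)$ from Theorem~\ref{bending length CH bound} (proved in \cite{BC05}) and substitutes into $V_R(M)\le W(\tau)=V_C(M)-\tfrac14 L(\beta_M)$; the constant $\pi=\tfrac14\cdot 4\pi$ and $1.79\approx\pi\log(2\sinh^{-1}(1))$ fall out immediately. Your proposed collar argument may well be what underlies the bending estimate in \cite{BC05}, but as a route to the present theorem it is a detour, and the Gauss--Bonnet-on-a-disk explanation for the factor $\pi$ is not the actual mechanism.
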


If $M=\Ht/\Gamma$, then the {\em domain of discontinuity} $\Omega(\Gamma)$ is the largest open 
subset of $\rs=\partial\Ht$ which $\Gamma$
acts properly discontinuously on. The quotient $\partial_cM=\Omega(\Gamma)/\Gamma$ is called the
{\em conformal boundary} of $M$. The manifold $M$ is convex cocompact if and only if 
$$\widehat M=M\cup\partial_cM=(\Ht\cup\Omega(\Gamma))/\Gamma$$
is compact. $\Omega(\Gamma)$ admits a unique conformal metric of curvature $-1$, called the
{\em Poincar\'e metric}. Since the Poincar\'e metric is conformally natural, it descends to a hyperbolic
metric on the conformal boundary. We also obtain a version of our theorem where the bounds
depend on the injectivity radius of the Poincar\'e metric on $\Omega(\Gamma)$.

\begin{theorem}
\label{main}
If $M=\mathbb H^3/\Gamma$ is a convex cocompact hyperbolic 3-manifold, 
$\partial C(M)$ is compressible and $\nu > 0$ is the injectivity radius of the 
Poincare metric on $\Omega(\Gamma)$, then
$$V_C(M) - |\chi(\partial C(M))|\left(\frac{205}{\nu}+202\right) \leq V_R(M) < V_C(M). $$
Furthermore, if $\nu\le \frac{1}{2}$, then
$$V_R(M)\le V_C(M) - \left( \frac{9}{\nu}-9\right)$$
\end{theorem}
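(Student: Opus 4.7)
The plan is to deduce Theorem \ref{main} from Theorem \ref{main2} by converting the intrinsic injectivity radius $\eta$ of $\partial\widetilde{C(M)}$ into the Poincar\'e injectivity radius $\nu$ of $\Omega(\Gamma)$. This conversion is natural because the nearest-point retraction $r:\Omega(\Gamma)\to\partial\widetilde{C(M)}$ is $\Gamma$-equivariant, so one can compare the lengths of corresponding loops in the two intrinsic metrics by pushing and pulling through $r$. The refined Sullivan--Epstein--Marden estimates summarized in the authors' prior papers \cite{bridgeman,BC03,BC05,BC10,cbbc} provide explicit two-sided Lipschitz-type control for this comparison. My first step is to extract from these references an inequality of the form $\log(1/\min\{1,\eta\}) \le A/\nu + B$ with explicit constants $A,B>0$ valid when $\nu$ is small; note that the growth is linear in $1/\nu$ rather than in $\log(1/\nu)$ because the Poincar\'e metric can be exponentially distorted compared to the intrinsic metric on the dome in the thin part.

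Second, I would insert this comparison into the main inequality of Theorem \ref{main2}:
$$V_C(M)-|\chi(\partial M)|\bigl(67\log(1/\min\{1,\eta\})+45\bigr)\le V_R(M)<V_C(M).$$
Since $\chi(\partial M)=\chi(\partial C(M))$ for convex cocompact $M$, and since the compressibility hypothesis transfers verbatim, bookkeeping the constants so that $67A$ is absorbed into the coefficient $205$ and $67B+45$ into $202$ yields the two-sided inequality of Theorem \ref{main}. The strict upper bound $V_R(M)<V_C(M)$ is inherited directly from Theorem \ref{main2} (compressibility of $\partial C(M)$ forces nontrivial bending, so the convex core is not totally geodesic).

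For the furthermore clause $V_R(M)\le \frac{10}{\nu}-9$ when $\nu\le\frac{1}{2}$, the approach is to start from the second inequality of Theorem \ref{main2}, namely $V_R(M)\le V_C(M)-\pi\log(1/\eta)-1.79$, after checking via the metric comparison that $\nu\le\frac{1}{2}$ forces $\eta\le \sinh^{-1}(1)$. One then needs to control $V_C(M)$ itself in this thin regime: a short Poincar\'e geodesic on $\Omega(\Gamma)/\Gamma$ is realized by a compressing curve, and decomposing $C(M)$ across the associated compressing disk shows that the portion of $V_C(M)$ not already cancelled by the $\pi\log(1/\eta)$ term is bounded by an explicit function of $\nu$. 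Combining these bounds and tuning the constants is expected to produce the claimed inequality. The principal obstacle is pinning down the $\eta$-to-$\nu$ conversion with constants sharp enough that the specific coefficients $205,\ 202,\ 10,\ 9$ fall out cleanly, and simultaneously ensuring that the complementary upper bound on $V_C(M)$ required for the furthermore clause matches the form $\frac{10}{\nu}-9$; once those comparison constants are fixed, the remaining steps reduce to substitution and arithmetic.
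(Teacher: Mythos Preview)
Your proposal does not match the paper's argument, and it has a genuine gap in each of the two parts.

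\textbf{The lower bound.} The paper proves Theorem~\ref{main} directly, in parallel with Theorem~\ref{main2}, rather than deducing one from the other. It combines Theorem~\ref{bilip} (the $\nu$-dependent comparison $\tau/(2\sqrt{2}(k+\pi^2/2\nu))\le\rho\le\tau$), the linearity/monotonicity Lemma~\ref{monotonicity}, the identity $W(\tau)=V_C(M)-\tfrac14 L(\beta_M)$ from Lemma~\ref{bending and Wvolume}, and the $\nu$-version of the bending bound, Theorem~\ref{bending length}. Your route through Theorem~\ref{main2} is logically coherent in outline, but the constants cannot be ``absorbed'' as you hope. Proposition~\ref{inj back and forth}(1) gives $\log(1/\eta)\le \log 2 + m + \pi^2/(2\nu)$, so the coefficient of $1/\nu$ coming out of $67\log(1/\min\{1,\eta\})$ is at best $67\cdot\pi^2/2\approx 330$, already exceeding $205$; adding the constant term only makes things worse. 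The paper in fact remarks (just before the proof of Theorem~\ref{main2}) that passing between the two theorems via Proposition~\ref{inj back and forth} yields a version with worse constants; the same loss occurs in your direction.

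\textbf{The ``furthermore'' clause.} Here your strategy is not just suboptimal but unworkable. What the paper actually proves (and clearly intends) is
\[
V_R(M)\ \le\ V_C(M)-\Bigl(\tfrac{10}{\nu}-9\Bigr),
\]
obtained immediately from $V_R(M)\le W(\tau)=V_C(M)-\tfrac14 L(\beta_M)$ together with the lower bound $L(\beta_M)\ge 37/\nu-36$ from Theorem~\ref{bending length}; no control on $V_C(M)$ is needed or used. Your plan instead tries to establish the bare inequality $V_R(M)\le 10/\nu-9$ by bounding $V_C(M)$ via a compressing-disk decomposition. That cannot succeed: for any fixed $\nu$ one can pass to finite covers (as in Proposition~\ref{chi necessary}), which leaves $\Omega(\Gamma)$ and hence $\nu$ unchanged while $V_C(M)$ and $V_R(M)$ grow without bound. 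So there is no inequality of the form $V_C(M)\le f(\nu)$, and the step ``control $V_C(M)$ itself in this thin regime'' fails. Also, your auxiliary claim that $\nu\le\tfrac12$ forces $\eta\le\sinh^{-1}(1)$ does not follow from Proposition~\ref{inj back and forth} at the endpoint $\nu=\tfrac12$.
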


One may loosely reformulate Theorem \ref{main2} as saying that $V_C(M)-V_R(M)$ is comparable to
$\log\frac{1}{\eta(M)}$ when $\eta(M)$ is small,  where $\eta(M)$ is the injectivity radius of $\widetilde{\partial C(M)}$.
Similiarly, one may reformulate Theorem \ref{main} as saying that $V_C(M)-V_R(M)$ is comparable to
$\frac{1}{\nu(M)}$ when $\nu(M)$ is small,  where $\nu(M)$ is the injectivity radius of $\Omega(\Gamma)$ in the Poincar\'e
metric. 

We note that one may obtain slightly more precise forms of  our results by giving exact forms
for the constants involved, but the expressions for the constants would be rather unpleasant and it seems
unlikely that the constants obtained by our techniques are sharp. However, our estimates are of
roughly the correct asymptotic form as $\nu$ or $\eta$ approach 0.

\medskip\noindent
{\bf Acknowledgements:} 
The authors  would also like to thank Curt McMullen and Greg McShane for useful conversations related to this work.
This material is based upon work supported by the National Science Foundation under grant No. 0932078 000 while
the authors were in residence at the Mathematical Sciences Research Institute in Berkeley, CA, during the
Spring 2015 semester.

\section{Renormalized Volume}

In this section, we recall the work of Krasnov-Schlenker (\cite{KS08,KS12}) and 
Schlenker (\cite{schlenker-qfvolume}) on renormalized volume for convex cocompact hyperbolic 3-manifolds. 
We will assume for the remainder of the paper that $M=\Ht/\Gamma$ is convex cocompact.

If $N$ is a compact, $C^{1,1}$  strictly convex submanifold such that  the inclusion of $N$ into $M$
is a homotopy equivalence, the {\em $W$-volume} of $N$ is given by
$$W(N) = V(N) - \frac{1}{2}\int_{\partial N} H dA$$
where $H$ is the mean curvature function.\footnote{We are using the convention that the mean curvature $H$ is the 
average of the principal curvatures, while Krasnov and Schlenker \cite{KS08,KS12} use the convention that $H$ is the
sum of the principal curvatures, so our definition, although apparently different, agrees with theirs.}
(We recall that a submanifold $N$ is strictly convex if the interior of any  geodesic in $M$ joining two
points in $N$ lies in the interior of $N$.)

%

Notice that if $N$ is $C^{1,1}$, then the curvature and mean curvature of $\partial N$ are defined
almost everywhere and the integral of mean curvature is well-defined and well-behaved. This is the
natural regularity assumption, since a metric neighborhood of the convex core is $C^{1,1}$
(see Epstein-Marden \cite[Lemma II.1.3.6]{EM87}), but need not be $C^2$.

If $r>0$ and $N_r$ is the closed $r$-neighborhood of $N$, then $N_r$ is $C^{1,1}$ and strictly convex, and
$\{S_r=\partial N_r\}_{r>0}$ is a family of equidistant surfaces foliating the end of $M$. In particular,
$N_r$ is homeomorphic to $\widehat M$ for all $r>0$.
The following fundamental lemma relates $W(N_r)$ to $W(N)$.

\begin{lemma}
\label{fundamental lemma}
{\rm (Krasnov-Schlenker \cite[Lemma 4.2]{KS08}, Schlenker \cite[Lemma 3.6]{schlenker-qfvolume})}
If $M$ is a convex cocompact hyperbolic 3-manifold and $N$ is a strictly convex, $C^{1,1}$, compact submanifold 
such that  the inclusion of $N$ into $M$ is a homotopy equivalence, then
$$W(N_r) = W(N) -r\pi\chi(\partial C(M)).$$
\end{lemma}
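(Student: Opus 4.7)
The plan is to show that $\frac{d}{dr}W(N_r) = -\pi\chi(\partial C(M))$ for all $r\ge 0$---a constant in $r$, since $\partial N_r$ is homeomorphic to $\partial\widehat{M}$ throughout the deformation, and hence has the same Euler characteristic as $\partial C(M)$---and then integrate from $0$ to $r$.

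The derivative of the volume term is immediate: $\frac{d}{dr}V(N_r) = \mathrm{Area}(\partial N_r) =: A_r$. For the mean curvature integral, I would parameterize $\partial N_r$ by $\partial N$ via the outward normal exponential map and use the standard first-variation formulas under this equidistant flow in $\Ht$. With $H = \tfrac12(k_1+k_2)$ the mean of the principal curvatures of $\partial N_r$,
\[ \frac{d(dA_r)}{dr} = 2H\, dA_r, \qquad \frac{dH}{dr} = -\tfrac12(k_1^2+k_2^2) + 1, \]
the $+1$ reflecting $\mathrm{Ric}(\nu,\nu) = -2$ in $\Ht$. Combining these, and using the identity $2H^2 - \tfrac12(k_1^2+k_2^2) = k_1 k_2$,
\[ \frac{d}{dr}\int_{\partial N_r} H\, dA = \int_{\partial N_r}(k_1 k_2 + 1)\, dA_r = \int_{\partial N_r}(K+2)\, dA_r, \]
where $K = k_1 k_2 - 1$ is the intrinsic Gauss curvature of $\partial N_r$ by the Gauss equation in $\Ht$. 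Applying Gauss-Bonnet, $\int_{\partial N_r} K\, dA = 2\pi\chi(\partial C(M))$, so
\[ \frac{dW(N_r)}{dr} = A_r - \tfrac12\bigl(2\pi\chi(\partial C(M)) + 2A_r\bigr) = -\pi\chi(\partial C(M)). \]
Integration yields the lemma.

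The main obstacle is the low regularity: $\partial N$ is only $C^{1,1}$, so $k_1, k_2$ exist merely almost everywhere. I would handle this by working with the explicit parallel-surface identities
\[ k_i(r) = \frac{k_i + \tanh r}{1 + k_i \tanh r}, \qquad \frac{dA_r}{dA_0} = \prod_{i=1}^{2}(\cosh r + k_i\sinh r), \]
which depend smoothly on $r$ at every point where $k_1, k_2$ are defined. Differentiation under the integral sign is then justified by dominated convergence (the relevant expressions are locally uniformly bounded in $r$ over compact intervals and over $\partial N$, using strict convexity to keep $1+k_i\tanh r$ away from zero), and the pointwise computation above holds a.e., which suffices to extract the identity for the integrals. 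Alternatively, one can approximate $N$ from the outside by the smoother submanifolds $N_\epsilon$ for small $\epsilon>0$, apply the formula to these, and pass to the limit $\epsilon\to 0$ using continuity of $V$ and of $\int H\, dA$ under the equidistant flow.
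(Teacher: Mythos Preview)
Your argument is correct and follows the same strategy as the paper's sketch: differentiate $W(N_r)$ in $r$ and show the derivative is the constant $-\pi\chi(\partial C(M))$. The paper packages the computation slightly differently, observing that $\int_{S_t}H\,dA_t=\tfrac12 A'(t)$ so that $\dot W_t=A(t)-\tfrac14 A''(t)$, and then reading off from the parallel-surface area formula that $A(t)$ is a solution of $y''-4y=0$ plus a constant which Gauss--Bonnet identifies as $-\pi\chi$; your direct first-variation/Gauss-equation computation on $\partial N_r$ reaches the same endpoint, and your explicit handling of the $C^{1,1}$ regularity is more careful than the paper's sketch.
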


Lemma \ref{fundamental lemma} follows from the fact that 
$$\dot{W}_t = \frac{d}{dt}W(N_t) = \frac{d}{dt} V(N_t) - \frac{1}{2} \frac{d}{dt}\left(\int_{S_t} H_t dA_t\right) = 
A(t) - \frac{1}{4}A''(t) $$
where $A(t)$ is the area of $S_t$. The general solution  to the equation $y''-4y=0$ is $ae^{2t}+be^{-2t}$. Therefore
as the exponential terms in $A(t)$  are of this form, they do not contribute to a change in $W$-volume.  
Further analysis shows that the remaining terms give $\dot{W}_t = -\pi\chi(\partial N)$.

If $I_r$ is the intrinsic metric  on $S_r$,  the normal map identifies $S_r$ with the conformal boundary $\partial_cM$
and one may define the limiting conformal metric $I^*$ on $\partial_c M$ by 
$$I^* = \lim_{r\rightarrow \infty} 4e^{-2r} I_r.$$

C. Epstein  \cite{epstein-envelopes} showed that given any conformal $C^{1,1}$ metric $h$ on $\partial_c M$, 
there exists an (asymptotically) unique family of equidistant submanifolds $N_r(h)$, called  the 
{\em Epstein submanifolds}  whose limiting conformal structure is $h$. 
Explicitly, let $\Omega \subseteq \rs$
be a hyperbolic domain in the Riemann sphere and let  $g$ be a $C^{1,1}$ conformal metric on $\Omega$. 
Given $z\in \Omega$, let $H(z,g)$ be the horoball bounded by the horosphere
$$h(z,g) = \left\{ x\in \Ht\ |\ v_x(z) = g(z)\right\}$$
where $v_x$ is the visual metric on $\rs$ obtained by identifying $\rs$ with $T_x^1\Ht$.
Then
$$\Sigma(g) =\partial \left(\bigcup_{z\in\Omega} H(z,g)\right).$$
is the outer envelope of the collection of horospheres \hbox{$\{h(z,g)\}_{z\in \Omega}$.}

If $h$ is a conformal metric on $\partial_c M$, then $h$ lifts to
a metric $\tilde h$ on $\Omega(\Gamma)$ .
For all sufficiently large $r$,
$\Sigma(e^r \tilde h)$ descends to a $C^{1,1}$ surface $S_r$ bounding a strictly convex
submanifold $N_r(h)$ of $M$. 
Lemma \ref{fundamental lemma} indicates that it is natural
to define the W-volume of $h$ as
$$W(h) = W(N_r(h)) + r \pi\chi(\partial N_r(h))$$
for any $r$ large enough that $N_r(h)$ is well-defined, strictly convex and $C^{1,1}$.

The {\em renormalized volume} $V_R(M)=W(\rho)$ where $\rho$ is the Poincar\'e metric on the
conformal boundary $\partial_c(M)$. Krasnov and Schlenker \cite[Section 7]{KS08} showed 
that the renormalized volume is the maximum of $W(h)$ as $h$ varies over
all smooth conformal metrics on $\partial_cM$ with area $2\pi|\chi(\partial_cM)|$. 

The $W$-volume satisfies the following linearity and monotonicity properties, which will
be very useful in establishing our bounds.

\begin{lemma}
\label{monotonicity}
{\rm (Schlenker \cite[Proposition 3.11,Corollary 3.8]{schlenker-qfvolume}\footnote{The references here and elsewhere in
the paper are to the revised version of \cite{schlenker-qfvolume} which appears at arXiv:1109.6663. In particular, the assumption
that $g$ and $h$ are non-positively curved is omitted from the published version.})}
Let $M$ be a convex cocompact hyperbolic manifold. Then
\begin{enumerate}
\item (Linearity) If $s \in \R$ and $h$ is a $C^{1,1}$ conformal metric on $\partial_cM$,  then 
$$W(e^{s}h) = W(h) -s\pi\chi(\partial M).$$ 
\item (Monotonicity) If $g$ and $h$ are non-positively curved, $C^{1,1}$, conformal metrics on $\partial_c M$ 
and $g(x) \leq h(x)$ for all $x \in \partial_c M$, then 
$$W(g) \leq W(h).$$
\end{enumerate}
\end{lemma}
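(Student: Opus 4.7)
\smallskip
\noindent\emph{Proof plan.}
For part (1), the key observation is that a constant rescaling of the conformal metric simply shifts the Epstein parameter. Since $\widetilde{e^{s}h}=e^{s}\tilde h$, we have
\[
\Sigma(e^{r}\widetilde{e^{s}h})=\Sigma(e^{r+s}\tilde h),
\]
so $N_{r}(e^{s}h)=N_{r+s}(h)$. Choosing $r$ large enough that both sides are defined, strictly convex and $C^{1,1}$, and comparing the two expressions
\[
W(e^{s}h)=W(N_{r+s}(h))+r\pi\chi(\partial M), \qquad W(h)=W(N_{r+s}(h))+(r+s)\pi\chi(\partial M)
\]
(the first from the definition of $W$ on a metric applied to $e^{s}h$ with parameter $r$, the second applied to $h$ with parameter $r+s$), subtraction immediately yields the linearity identity $W(e^{s}h)=W(h)-s\pi\chi(\partial M)$.

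For part (2), the plan is a first-variation argument along a pointwise non-decreasing interpolation of non-positively curved conformal metrics joining $g$ to $h$. A natural candidate is the log-linear interpolation $h_{t}=e^{2(1-t)\phi_{g}+2t\phi_{h}}\rho$, where $\rho$ is the Poincar\'e metric and $\phi_{g},\phi_{h}$ are the conformal factors of $g,h$ relative to $\rho$; the hypothesis $g\le h$ is equivalent to $\phi_{g}\le \phi_{h}$, so $h_{t}$ is pointwise non-decreasing in $t$. Granting that such a path can be realized within the $C^{1,1}$ non-positively curved class, it suffices to prove $\tfrac{d}{dt}W(h_{t})\ge 0$. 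Writing
\[
W(h_{t})=V(N_{r}(h_{t}))-\tfrac{1}{2}\int_{\partial N_{r}(h_{t})}H_{t}\,dA_{t}+r\pi\chi(\partial M)
\]
and differentiating, the conformal variation $\dot h_{t}$ induces a normal deformation of the Epstein surface, and the standard first variations of volume and of integrated mean curvature express $\tfrac{d}{dt}W(h_{t})$ as an integral over $\partial_{c}M$ whose integrand is algebraic in the principal curvatures of $\partial N_{r}(h_{t})$, the Gauss curvature $K_{h_{t}}$, and the logarithmic derivative $\dot h_{t}/h_{t}$.

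The main obstacle is fixing the sign of this first-variation integrand. Epstein's correspondence between the intrinsic geometry of a conformal metric at infinity and the extrinsic geometry of its envelope ties the principal curvatures of $\partial N_{r}(h_{t})$ to $K_{h_{t}}$, and the hypothesis $K_{h_{t}}\le 0$ constrains those principal curvatures to a regime in which the integrand carries the same sign as $\dot h_{t}/h_{t}\ge 0$; integrating from $t=0$ to $t=1$ then gives $W(g)\le W(h)$. A secondary technical point is verifying that $g$ and $h$ can actually be joined through a $C^{1,1}$, pointwise non-decreasing path of non-positively curved metrics: if the log-linear interpolation fails to preserve non-positive curvature one must instead regularize, for instance by first smoothing $g$ and $h$ within their conformal classes or by replacing the interpolation argument with a direct pointwise comparison that still produces the required sign.
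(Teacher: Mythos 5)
Part (1) of your proposal is correct and is essentially the argument the paper sketches: the identity $N_r(e^sh)=N_{r+s}(h)$ plus the $r$-independence of the definition of $W(h)$ (which is exactly what Lemma \ref{fundamental lemma} provides) gives the linearity formula by subtraction. Your secondary worry about the interpolation in part (2) is also not a real obstruction: writing $h_t=e^{2u_t}\rho$ with $u_t=(1-t)\phi_g+t\phi_h$, the curvature is $K_{h_t}=e^{-2u_t}(-1-\Delta_\rho u_t)$, so non-positive curvature is the affine condition $\Delta_\rho u_t\ge -1$, which is preserved by linear interpolation of the $\phi$'s; no regularization is needed. Your interpolation strategy is, moreover, the same in spirit as the proof the paper attributes to Schlenker: the Epstein surfaces $\partial N_r(h_t)$ of a monotone family sweep out $N_r(h)\setminus N_r(g)$, and showing $\frac{d}{dt}W(h_t)\ge 0$ is exactly showing that Schlenker's relative $W$-volume of that region is non-negative.

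The genuine gap is in the heart of part (2). The entire content of monotonicity is the sign of the first-variation integrand, and your proposal asserts rather than establishes it: you say that Epstein's correspondence ``ties the principal curvatures of $\partial N_r(h_t)$ to $K_{h_t}$'' and that $K_{h_t}\le 0$ ``constrains those principal curvatures to a regime in which the integrand carries the same sign as $\dot h_t/h_t$,'' but you never write down the integrand, never state which constraint on the principal curvatures non-positive curvature at infinity actually yields, and never verify the sign. This is not a routine verification: the naive first variation of $V-\frac12\int H\,dA$ under a normal deformation of speed $f$ produces terms involving $k_1k_2$ and $(k_1+k_2)^2$ whose sign is not evident, and the deformation of the Epstein surface induced by $\dot h_t$ is not purely normal with speed $\dot h_t/h_t$ --- there is a tangential component and second-order dependence of the envelope on the metric at infinity that must be accounted for. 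Schlenker's argument handles this by an explicit computation for foliations by convex surfaces with non-positively curved data at infinity (his Corollary 3.8), and without reproducing that computation, or an equivalent one, your plan does not yet prove the inequality. As written, the proposal correctly identifies the strategy but defers precisely the step that makes the lemma true.
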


The proof of (1) follows nearly immediately from the definitions. We note that from the definition of $N_r(h)$ that 
$N_r(e^{s}h) = N_{r+s}(h)$. Therefore,
\begin{eqnarray*}
W(e^{s}h) & = & W(N_r(e^{s}h)) + \pi r\chi(M) = W(N_{r+s}(h)) + \pi r\chi(\partial M) \\
 & = & (W(N_r(h))-\pi s\chi(\partial M)) + \pi r\chi(\partial M) \\
 & = & (W(N_r(h)) +\pi r\chi(\partial M)) -\pi s\chi(\partial M)\\
 & =  &  W(h)-\pi s\chi(\partial M).\\
\end{eqnarray*}

The proof of (2) is more involved. One first observes that if $g \leq h$ and $r$ is large enough that
$N_r(g)$ and $N_r(h)$ are both defined, then $N_r(g)\subseteq N_r(h)$. Schlenker then defines
a relative $W$-volume of the region \hbox{$N_r(h)-N_r(g)$}, which agrees with \hbox{$W(N_r(h))-W(N_r(g))$},
and uses a foliation of \hbox{$N_r(h)-N_r(g)$}  by strictly convex, $C^{1,1}$, non-positively curved surfaces to
prove that this relative $W$-volume is non-negative.
 
\section{The Thurston metric on the conformal boundary}
\label{Thurston metric}

The {\em Thurston metric} $\tau=\tau(z)|dz|$ on  a hyperbolic domain $\Omega\subset\rs$ is defined 
by letting the length of a vector
$v \in T_{z}(\Omega)$ be the infimum of the hyperbolic length of all vectors
$v' \in \Hp$ such that there exists a M\"obius transformation 
$f$ such that $f(\Hp)\subset \Omega$ and $df(v') = v$. The Thurston
metric is clearly conformally natural and conformal to the Euclidean metric.
Therefore, if $M=\Ht/\Gamma$ is convex cocompact, then the Thurston metric $\tau$ on $\Omega(\Gamma)$
descends to a conformal metric on $\partial_cM$ which we will again denote $\tau$ and call the Thurston metric.
Kulkarni and Pinkall \cite[Theorem 5.9]{KP} proved that the Thurston metric is $C^{1,1}$ and
non-positively curved (see also Herron-Ibragimov-Minda \cite[Theorem C]{HIM}).

We recall that the Poincar\'e metric  $\rho=\rho(z)|dz|$ on $\Omega$ can be similarly defined by letting
the length of a vector $v \in T_z(\Omega)$ be the infimum of the hyperbolic length over  all vectors
$v'$ such that there exists a conformal map $f:\Hp \rightarrow \rs$ such that
$f(\Hp)\subset \Omega$ and  $df(v') = v$. So, by definition, 
$$\rho(z)\le\tau(z)$$ for all $z\in\Omega$.  
So, by the monotonicity
lemma, Lemma \ref{monotonicity}, 
$$V_R(M)=W(\rho) \leq W(\tau).$$

One may combine estimates of Beardon-Pommerenke  \cite{BP}, Canary \cite[Corollary 3.3]{cbbc} and
Kulkarni-Pinkall \cite[Theorem 7.2]{KP} to establish the following relationship between the Poincar\'e metric
and the Thurston metric  of a uniformly perfect hyperbolic domain (see Bridgeman-Canary \cite[Section 3]{BC10}).
Notice that if $M=\Ht/\Gamma$ is convex cocompact, then $\Gamma$ acts cocompactly by isometries on
$\Omega(\Gamma)$, so there is a lower bound on the injectivity radius of $\Omega(\Gamma)$ in the Poincar\'e
metric.

\begin{theorem}
Let $\Omega$ be a hyperbolic domain in $\rs$ and let $\nu > 0$ be the injectivity radius of the Poincare metric 
$\rho$ on $\Omega$. If $\tau$ is the Thurston metric on $\Omega$ and $k =4+\log(3+ 2\sqrt{2})\approx 5.76$, then
$$\frac{\tau(z)}{2\sqrt{2}(k+\frac{\pi^2}{2\nu})}\le  \rho(z)\le \tau(z)$$
for all $z\in\Omega$.
Moreover, $\rho=\tau$ if and only if $\Omega$ is a round disk.
\label{bilip}
\end{theorem}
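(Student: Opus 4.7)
\medskip

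\noindent\emph{Proof proposal.} The upper bound $\rho(z)\le\tau(z)$ and the accompanying rigidity statement are almost already in hand. For the upper bound I would simply compare the two defining infima: every M\"obius embedding $f\colon\Hp\to\Omega$ is a conformal embedding, so the infimum defining $\tau(z)$ is taken over a subset of the family defining $\rho(z)$, forcing $\rho\le\tau$. The rigidity claim ($\rho=\tau$ if and only if $\Omega$ is a round disk) is exactly the content of Kulkarni--Pinkall \cite[Theorem 7.2]{KP}; one direction is immediate since $\rho=\tau$ on a round disk, and the converse uses that equality on an open set forces every infimum in the definition of $\tau$ to be attained by an inscribed round disk covering all of $\Omega$.

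The substance of the theorem is the lower bound, which I would prove by comparing both metrics to the Euclidean distance $d_E(z,\partial\Omega)$ to the boundary. The first estimate I would use is the Kulkarni--Pinkall upper bound on the Thurston metric,
$$\tau(z)\le \frac{2\sqrt{2}}{d_E(z,\partial\Omega)},$$
which comes from noting that the round disk inscribed in $\Omega$ of radius $d_E(z,\partial\Omega)$ centered at the nearest boundary-avoiding point provides a test M\"obius embedding and controls $\tau(z)$ by the hyperbolic density of that disk at $z$. The second ingredient is the Beardon--Pommerenke lower bound \cite{BP} for the Poincar\'e metric on a uniformly perfect domain,
$$\rho(z)\ge \frac{1}{d_E(z,\partial\Omega)\bigl(k+\beta(z)\bigr)},$$
where $\beta(z)$ measures the uniform perfectness of $\Omega$ near $z$ and $k=4+\log(3+2\sqrt{2})$ is the absolute constant appearing in the statement.

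The third ingredient is Canary \cite[Corollary 3.3]{cbbc}, which translates the injectivity-radius hypothesis into a bound on the Beardon--Pommerenke quantity: if the Poincar\'e metric on $\Omega$ has injectivity radius at least $\nu$, then
$$\beta(z)\le \frac{\pi^2}{2\nu}.$$
This is the bridge between a metric statement (about closed geodesics in the quotient of $\Omega$) and a geometric statement about the distribution of boundary points, and it is the step that is genuinely hyperbolic-geometric rather than complex-analytic.

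Combining the three inequalities in the forms above yields
$$\frac{\rho(z)}{\tau(z)} \ge \frac{1}{2\sqrt{2}\bigl(k+\beta(z)\bigr)} \ge \frac{1}{2\sqrt{2}\bigl(k+\frac{\pi^2}{2\nu}\bigr)},$$
which is the required lower bound. The only real obstacle is matching the constants from three different sources so that the compound constant $2\sqrt{2}(k+\pi^2/(2\nu))$ comes out cleanly; once the three cited estimates are normalized to measure everything against $d_E(z,\partial\Omega)$, the combination is formal. The rigidity in the equality case requires no extra argument beyond quoting Kulkarni--Pinkall, since equality in $\rho\le\tau$ already forces $\Omega$ to be a round disk.
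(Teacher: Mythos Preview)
Your proposal is correct and follows exactly the route the paper indicates: the paper does not give a self-contained proof but explicitly says the theorem is obtained by combining the Beardon--Pommerenke estimate \cite{BP}, Canary \cite[Corollary 3.3]{cbbc}, and Kulkarni--Pinkall \cite[Theorem 7.2]{KP}, with details in \cite[Section 3]{BC10}. You have identified the same three ingredients and assembled them in the intended way, including deriving the upper bound $\rho\le\tau$ directly from the definitions as the paper does just before the theorem statement.
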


If $\Omega$ is a simply connected hyperbolic domain, then 
the Thurston metric and the Poincar\'e metric are 2-bilipschitz.

\begin{theorem}{\rm (Anderson \cite[Thm. 4.2]{anderson-thesis}, Herron-Ma-Minda \cite[Lemma 3.2]{HMM05})}
\label{bilip incomp}
If $\Omega$ is a simply connected hyperbolic domain with Poincare metric $\rho$ and Thurston metric $\tau$, then
$$\frac{\tau(z)}{2}\le\rho(z)\le\tau(z)$$
for all $z\in\Omega$.
\end{theorem}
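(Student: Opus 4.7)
The upper inequality $\rho(z)\le\tau(z)$ is just the specialization of the general observation made immediately before Theorem \ref{bilip}: the Thurston metric is defined as an infimum over M\"obius embeddings of $\Hp$ into $\Omega$, a subclass of the conformal embeddings used in defining $\rho$. Equivalently, by Schwarz-Pick, if $D\subset\Omega$ is any round disk or half-plane containing $z$, then the holomorphic inclusion $D\hookrightarrow\Omega$ is a Poincar\'e contraction, so $\rho_\Omega(z)\le\rho_D(z)$; taking the infimum over such $D$ yields $\rho_\Omega(z)\le\tau_\Omega(z)$. This argument uses nothing about simple connectivity.

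For the sharp lower bound $\rho(z)\ge\tau(z)/2$, the plan is to use the Riemann mapping theorem together with the classical distortion theory of normalized univalent functions. Fix $z\in\Omega$ and let $\phi:\mathbb D\to\Omega$ be a Riemann map with $\phi(0)=z$. By translating and rescaling, I normalize so that $z=0$ and $\phi'(0)=1$; with the convention $\rho_{\mathbb D}(0)=2$, this forces $\rho_\Omega(0)=2$, and the target inequality reduces to exhibiting a single round disk or half-plane $D\subset\Omega$ with $0\in D$ and $\rho_D(0)\le 4$. The extremal instance is the Koebe function $k(w)=w/(1-w)^2$, whose range $\mathbb C\setminus(-\infty,-1/4]$ contains the half-plane $\{\operatorname{Re} w>-1/4\}$ of Poincar\'e density exactly $4$ at the origin. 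For general univalent $\phi$ I would appeal to the Koebe distortion theorems to produce such a $D$ inside $\phi(\mathbb D)$, with the Koebe slit plane realizing the worst case; un-normalizing then gives $\tau_\Omega(z)\le 2\rho_\Omega(z)$.

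The main obstacle is extracting the sharp constant $2$ rather than the easier constant $4$. Indeed, if one uses only the maximal Euclidean disk centered at $z$, Koebe's one-quarter theorem yields $\tau_\Omega(z)\le 2/d(z,\partial\Omega)$ while the standard lower Koebe estimate gives $\rho_\Omega(z)\ge 1/(2\,d(z,\partial\Omega))$, producing only $\tau_\Omega\le 4\rho_\Omega$. To tighten this to a factor $2$ one must employ off-center round disks or, in the extremal situation, half-planes extending well beyond the nearest boundary point, precisely as in the slit-plane example where the optimal $D$ is a half-plane rather than any disk centered at $z$. Verifying equality for the Koebe function confirms that the constant $2$ in Theorem \ref{bilip incomp} is sharp, and makes clear why an appeal to the full distortion theory (rather than merely the one-quarter theorem) is essential.
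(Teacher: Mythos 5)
First, a point of comparison: the paper does not prove this statement at all --- it is quoted from Herron--Ma--Minda \cite{HMM05} (the text points to Theorem 2.1 and Lemma 3.1 there), so you are attempting to reprove a cited result rather than to replicate an argument in the paper. Your treatment of the upper bound $\rho\le\tau$ is correct and is exactly the observation the paper itself makes before Theorem \ref{bilip}: M\"obius embeddings of $\Hp$ form a subclass of the conformal embeddings defining $\rho$, or equivalently Schwarz--Pick applied to the inclusion of a round disk. Nothing is missing there.

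The lower bound, however, has a genuine gap, and it sits exactly at the heart of the theorem. After normalizing so that $z=0$, $\phi'(0)=1$ and $\rho_\Omega(0)=2$, what must be shown is that \emph{every} normalized univalent image $\Omega=\phi(\mathbb D)$ contains a round disk or half-plane $D\ni 0$ with $\rho_D(0)\le 4$; equivalently, writing $d_1,d_2$ for the distances from $0$ to the near and far boundary points of $D$ along a diameter, one needs $1/d_1+1/d_2\le 4$. Your proposal establishes only the factor-$4$ statement (via the one-quarter theorem applied to the maximal centered disk), verifies that the factor-$2$ statement holds with equality for the Koebe function, and then asserts that ``the Koebe distortion theorems'' and ``off-center round disks'' will handle the general case. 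That assertion is never executed, and it is not routine: if you try the natural implementation --- apply the quarter theorem to the Koebe transform of $\phi$ at $\zeta=t$ to get a disk of radius $\tfrac14(1-t^2)|\phi'(t)|$ centered at $\phi(t)$, and then control its position and size using the worst-case growth and distortion bounds $|\phi(t)|\le t/(1-t)^2$ and $|\phi'(t)|\ge(1-t)/(1+t)^3$ --- the two worst cases pull in opposite directions and you do not recover the constant $2$. Checking the extremal example shows the constant cannot be improved, but it does not prove the inequality; producing the required disk for an arbitrary univalent map is precisely the content of Lemma 3.1 of \cite{HMM05} and requires a finer argument than anything written here. As it stands, your argument proves $\tfrac{\tau}{4}\le\rho\le\tau$, which would degrade the constant $9.185$ in Theorem \ref{main incomp} but not the structure of the paper; to get the stated theorem you must either supply the missing construction or, as the authors do, simply cite \cite{HMM05}.
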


It will be useful to be able to pass back and forth between  lower bounds on the injectivity radius of the boundary 
$\partial\widetilde{C(M)}$ of
the universal cover of the convex core, in the intrinsic metric, and lower bounds on the injectivity radius bound of
the Poincar\'e metric on the domain of discontinuity. 

\begin{prop} 
\label{inj back and forth}
Suppose that $M=\Ht/\Gamma$ is a convex cocompact hyperbolic 3-manifold and that $\partial C(M)$ is non-empty.
\begin{enumerate}
\item
{\rm (Bridgeman-Canary \cite[Lemma 8.1]{BC03})} If $\nu>0$ is a lower bound for the injectivity radius of
$\Omega(\Gamma)$ in the Poincar\'e metric, then 
$$\frac{e^{-m}e^{\frac{-\pi^2}{2\nu}}}{2}$$
is a lower bound for the injectivity radius of $\partial\widetilde{C(M)}$ in its intrinsic metric, where $m = \cosh^{-1}(e^2) \simeq 2.68854$.
\item
{\rm (Canary \cite[Theorem 5.1]{cbbc})} If $\eta>0$ is a lower bound for the injectivity radius of $\partial\widetilde{C(M)}$ 
in its intrinsic metric, then 
$$\min\left\{ \frac{1}{2}, \frac{\eta}{.153}\right\}$$
is a lower bound for the injectivity radius of $\Omega(\Gamma)$ in the Poincar\'e metric.
\end{enumerate}
\end{prop}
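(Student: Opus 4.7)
The plan is to use the nearest-point retraction $r\colon \Omega(\Gamma) \to \partial\widetilde{C(M)}$, a $\Gamma$-equivariant homeomorphism by Epstein-Marden, as the geometric bridge between the two metric structures, with the Thurston metric $\tau$ serving as an intermediary between the Poincar\'e metric $\rho$ on $\Omega(\Gamma)$ and the intrinsic metric on $\partial\widetilde{C(M)}$. In both parts I would compare an essential loop on one side with its image on the other via $r$, combining (a) the comparison between $\rho$ and $\tau$ recorded in Theorem \ref{bilip} with (b) a length-distortion estimate for $r$ translating $\tau$-lengths on $\Omega(\Gamma)$ into intrinsic lengths on $\partial\widetilde{C(M)}$.

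For part (1), suppose $\gamma$ is an essential loop in $\partial\widetilde{C(M)}$ of small intrinsic length $\ell$. First I would lift $\gamma$ back via $r^{-1}$ and bound its $\tau$-length by a fixed multiple of $e^m \ell$, where $m = \cosh^{-1}(e^2)$ measures the hyperbolic distance from horospheres supporting $\tau$ out to the convex hull boundary. Feeding this $\tau$-length into the inequality $\tau \le 2\sqrt{2}(k + \pi^2/(2\nu))\rho$ yields a $\rho$-length estimate for the lifted loop. Requiring this estimate to exceed $2\nu$ (forced by the injectivity hypothesis) and solving for $\ell$ produces the claimed lower bound $\frac{1}{2}e^{-m}e^{-\pi^2/(2\nu)}$; the exponential in $\nu$ is the dominant term and absorbs the polynomial prefactor $k + \pi^2/(2\nu)$ at the cost of an innocuous numerical constant.

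For part (2), I would run the same diagram in reverse. A Poincar\'e-short essential loop $\gamma$ in $\Omega(\Gamma)$ is a fortiori $\tau$-short, since $\rho \le \tau$ pointwise. Pushing $\gamma$ forward to $r(\gamma) \subset \partial\widetilde{C(M)}$ and applying the length-distortion estimate for $r$ in the reverse direction controls its intrinsic length. The bound $\min\{1/2, \eta/0.153\}$ in the statement reflects two regimes: a universal $1/2$ lower bound surviving when $\eta$ is not small (coming from a coarse lower bound on $\rho$-injectivity valid whenever $\Omega(\Gamma)$ is not too degenerate), and a linear $\eta/0.153$ bound that takes over when $\eta$ is small, with $0.153$ being the explicit Lipschitz-type constant for $r$ on short loops in the analysis from \cite{cbbc}.

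The main obstacle in both directions is the length-distortion estimate for $r$ between $\tau$ and the intrinsic metric on $\partial\widetilde{C(M)}$: the retraction is not globally bilipschitz, so obtaining sharp constants requires working carefully with Epstein's horospherical envelope construction and tracking how it interacts with the support planes of the convex hull. Once this Lipschitz-type estimate is in hand, the remainder of each proof is algebraic manipulation of the resulting bounds via Theorem \ref{bilip}.
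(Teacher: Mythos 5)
First, a framing point: the paper does not prove Proposition \ref{inj back and forth} at all --- it is quoted from \cite{BC03} and \cite{cbbc} --- so your sketch has to be measured against the arguments in those references. Your overall idea (transfer short essential curves across the nearest point retraction $r$) is the right starting point, but the quantitative core of your part (1) cannot work. You propose to bound the $\tau$-length of $r^{-1}(\gamma)$ by a $\nu$-independent multiple $Ce^{m}\ell$ of the intrinsic length $\ell$, and then pay only the factor $2\sqrt{2}\left(k+\tfrac{\pi^2}{2\nu}\right)$ from Theorem \ref{bilip}. That scheme would yield $\eta\gtrsim \nu/\left(C'\left(k+\tfrac{\pi^2}{2\nu}\right)\right)$, which is polynomial in $\nu$; the logarithm of the Thurston--Poincar\'e comparison constant is only $O(\log(1/\nu))$, so no amount of ``absorbing the prefactor'' turns it into $e^{-\pi^2/(2\nu)}$. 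Moreover, the exponential loss is genuinely present, so your claimed distortion estimate for $r^{-1}$ is false. Test it on the round annulus $A_R=\{1<|z|<R\}$: the Poincar\'e injectivity radius is $\nu=\pi^2/\log R$, while the boundary of the convex hull of the complement contains the essential circle lying over $\{|z|=\sqrt{R}\}$ at Euclidean height $(\sqrt{R}-1)R^{1/4}$, whose intrinsic length is about $2\pi R^{-1/4}=2\pi e^{-\pi^2/(4\nu)}$. So the intrinsic-to-Thurston distortion of $r^{-1}$ on essential curves necessarily blows up exponentially in $1/\nu$, which is exactly why \cite[Lemma 8.1]{BC03} has an exponentially small constant; there the exponential enters through the Beardon--Pommerenke quantity $\beta_\Omega(z)\le \pi^2/(2\nu)$, a logarithm of a ratio of Euclidean distances to $\partial\Omega$, which must be exponentiated to control the geometry of support planes.

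In part (2) there is a sign error at the crucial step: since $\rho\le\tau$ pointwise, a loop satisfies $\ell_\rho(\gamma)\le\ell_\tau(\gamma)$, so a Poincar\'e-short loop is \emph{not} ``a fortiori $\tau$-short''; the implication goes the other way. To convert $\rho$-shortness into $\tau$-shortness you would need $\tau\le 2\sqrt{2}\left(k+\tfrac{\pi^2}{2\nu}\right)\rho$, whose constant depends on the very injectivity radius $\nu$ you are trying to bound from below --- a circular argument. Also, the $1/2$ in $\min\{1/2,\eta/0.153\}$ is not a universal lower bound on the Poincar\'e injectivity radius of nondegenerate domains of discontinuity (Schottky groups give arbitrarily small $\nu$); it is simply the threshold below which the short-geodesic/collar analysis of \cite[Theorem 5.1]{cbbc} applies, the conclusion being automatic when $\nu\ge 1/2$. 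The actual argument there works directly with the Euclidean geometry of the thin annular collar about a short $\rho$-geodesic and the support planes of the convex hull, rather than through the Thurston metric.
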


\medskip\noindent
{\bf Remark:}
The Thurston metric is also known as the projective (or grafting)
metric, as it arises from regarding $\Omega$ as a complex projective surface
and giving it the metric Thurston described on such surfaces (see Tanigawa \cite[Section 2]{tanigawa} or
McMullen \cite[Section 3]{mcmullenCE} for further details).
Kulkarni and Pinkall  \cite{KP} defined and studied a generalization
of this metric in all dimensions and it is also sometimes called the 
Kulkarni-Pinkall metric.

\section{The bending lamination and renormalized volume}

The boundary of the convex core of a convex cocompact hyperbolic \hbox{3-manifold} $M=\Ht/\Gamma$
is a hyperbolic surface in its intrinsic metric. It is totally geodesic
except along a lamination $\beta_M$, called the {\em bending lamination}. The bending lamination inherits a transverse
measure which records the degree to which the surface is bent along the lamination. The length
$L(\beta_M)$ of the bending lamination then records the total amount of bending of the convex core 
(see Epstein-Marden \cite[Section II.1.11]{EM87} for details on the bending lamination).

If $N_r$ is the closed $r$-neighborhood of $C(M)$ for all $r>0$, then one can easily check that
$\{\tilde S_r=\partial \tilde N_r\}_{r>0}$ is a family of Epstein surfaces for the Thurston metric on
$\Omega(\Gamma)$ (see Bridgeman-Canary \cite[Lemma 3.5]{BC10} for example).  
Using this observation, one may establish the following equality:

\begin{lemma}
\label{bending and Wvolume}
{\rm (Schlenker \cite[Lemma 4.1]{schlenker-qfvolume})}
If $M$ is a convex cocompact hyperbolic 3-manifold, $\partial C(M)$ is non-empty and
$\beta_M$ is the bending lamination, then
$$W(\tau) = V_C(M) - \frac{1}{4}L(\beta_M).$$
where $\tau$ is the Thurston metric on $\partial_cM$.
\end{lemma}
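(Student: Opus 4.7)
My plan rests on the identification, recalled just before the statement, between the Epstein submanifolds $N_r(\tau)$ of the Thurston metric $\tau$ and the $r$-neighborhoods $N_r=N_r(C(M))$ of the convex core. Granting this, the definition of $W(\tau)$ gives
$$W(\tau) \;=\; W(N_r) + r\pi\chi(\partial M) \;=\; V(N_r)-\tfrac{1}{2}\int_{\partial N_r} H\,dA + r\pi\chi(\partial M)$$
for every sufficiently small $r>0$, so the lemma reduces to an explicit computation of $V(N_r)$ and $\int_{\partial N_r}H\,dA$, and to checking that the answer, once $r\pi\chi(\partial M)$ is added, is independent of $r$ and equals $V_C(M)-\tfrac{1}{4}L(\beta_M)$.

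I would carry this out by decomposing $N_r\setminus C(M)$, in the universal cover, according to the stratification of $\partial C(M)$ by the bending lamination $\beta_M$. Above each totally geodesic plaque one has the standard equidistant slab of a totally geodesic plane in $\Ht$, with volume element $\cosh^2(t)\,dA\,dt$, outer boundary of area $A\cosh^2(r)$, and constant outer mean curvature $\tanh(r)$. Above the support of $\beta_M$ one has dihedral sectors around geodesics of $\Ht$ whose angular extents are recorded by the transverse bending measure; in the cylindrical coordinates $(s,t,\phi)$ in which the ambient metric is $ds^2+\cosh^2(s)\,dt^2+\sinh^2(s)\,d\phi^2$, such a sector contributes $\tfrac{1}{2}\sinh^2(r)$ in volume and $\tfrac{1}{2}\sinh(2r)$ in outer area per unit of (arclength $\times$ bending), while its outer cylindrical surface has principal curvatures $\tanh(r)$ and $\coth(r)$, hence mean curvature $\coth(2r)=\tfrac{1}{2}(\tanh r+\coth r)$. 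Integrating these local contributions against arclength along the leaves of $\beta_M$ and the transverse bending measure yields
$$V(N_r) = V_C(M) + \tfrac{A(\partial C(M))}{2}\bigl(r+\tfrac{1}{2}\sinh(2r)\bigr) + \tfrac{1}{2}L(\beta_M)\sinh^2(r),$$
$$\tfrac{1}{2}\int_{\partial N_r} H\,dA = \tfrac{A(\partial C(M))\sinh(2r)}{4} + \tfrac{L(\beta_M)\cosh(2r)}{4}.$$

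Subtracting and using the identity $\sinh^2 r=\tfrac{1}{2}(\cosh(2r)-1)$, the $r$-dependent hyperbolic trigonometry collapses to
$$W(N_r) = V_C(M) + \tfrac{r}{2}A(\partial C(M)) - \tfrac{1}{4}L(\beta_M).$$
The Gauss-Bonnet theorem for the intrinsically hyperbolic surface $\partial C(M)$ gives $A(\partial C(M)) = -2\pi\chi(\partial C(M)) = -2\pi\chi(\partial M)$, so $\tfrac{r}{2}A(\partial C(M)) = -r\pi\chi(\partial M)$. This term cancels the $r\pi\chi(\partial M)$ correction in the expression for $W(\tau)$, leaving the desired identity $W(\tau) = V_C(M)-\tfrac{1}{4}L(\beta_M)$.

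The only delicate step is the decomposition along $\beta_M$ for a general (non-atomic) measured lamination: for atomic $\beta_M$ the bending shell is literally a finite union of dihedral sectors and everything above is a direct calculation, but in general one must justify the Fubini-style integration of the local shell contributions against the product of arclength along the leaves with the transverse bending measure. I expect this to be the main technical obstacle, and would handle it either by a continuity argument in $\beta_M$ using density of weighted multicurves, or by working directly in the canonical bending coordinates on a collar of $\partial C(M)$ coming from the normal exponential map; either route is routine once the local formulas above are in hand.
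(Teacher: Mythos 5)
Your computation is correct and self-contained, but it is worth noting that the paper does not actually prove this lemma: it attributes the identity to Schlenker and records only the key input, namely that the equidistant surfaces $\partial N_r$ of the convex core are the Epstein surfaces of the Thurston metric. Schlenker's own argument is essentially the $r\to 0^+$ limit of the quantities you compute (as $r\to 0$ the mean-curvature integral tends to $\tfrac12 L(\beta_M)$ and $V(N_r)\to V_C(M)$, while the correction term $r\pi\chi$ vanishes), whereas you write out the full tube formula for each $r$; your version has the small advantage of re-verifying Lemma 2.1 along the way, since the $r$-dependence visibly collapses to $-r\pi\chi(\partial M)$. Two points deserve care. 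First, your argument needs the identification $N_r(\tau)=N_r(C(M))$ to hold \emph{with no shift in the parameter} $r$; a reparametrization $N_r(\tau)=N_{r+c}$ would leave an extra additive term $-c\pi\chi(\partial M)$ in the final answer. This exact normalization is what \cite[Lemma 3.5]{BC10} provides, so you should cite it for that precise statement rather than only for the qualitative identification. (Also, the defining formula $W(\tau)=W(N_r(\tau))+r\pi\chi$ is stated for $r$ large, not small, though for the convex core every $r>0$ works.) Second, for the Fubini issue with a general measured lamination, the cleanest packaging is the classical area formula $A(r)=A(\partial C(M))\cosh^2 r+L(\beta_M)\sinh r\cosh r$ for equidistant surfaces of the convex core (Epstein--Marden), from which $V(N_r)=V_C(M)+\int_0^r A(s)\,ds$ and $\int_{S_r}H\,dA=\tfrac12 A'(r)$ follow; this subsumes your sector-by-sector integration and avoids having to run a separate approximation argument, which (as you would discover) cannot be done by perturbing $\beta_M$ within a fixed $M$ and would instead require a density-plus-continuity argument over the deformation space.
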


Furthermore, we have the following bounds on the length of the bending lamination of the convex core
in terms of the injectivity radius of the Poincar\'e metric on the domain of discontinuity.

\begin{theorem}{\rm (Bridgeman-Canary \cite[Theorem $1'$, Theorem $2'$]{BC05})}
\label{bending length}
If \hbox{$M=\Ht/\Gamma$} is a convex cocompact hyperbolic 3-manifold 
and $\nu > 0$ is the  injectivity radius of the Poincare metric on $\Omega(\Gamma)$, then
$$ L(\beta_M) \leq |\chi(\partial M)| \left(\frac{807}{\nu}+771\right).$$
Furthermore, if $\nu \leq 1/2$, then 
$$L(\beta_M ) \geq \frac{37}{\nu}-36.$$
\end{theorem}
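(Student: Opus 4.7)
The plan, following our earlier paper \cite{BC05}, rests on the area identity
\[
\mathrm{Area}_\tau(\Omega(\Gamma)/\Gamma) = 2\pi|\chi(\partial M)| + L(\beta_M),
\]
which I would establish by decomposing the pleated surface $\partial C(M)$ into flat pieces and the pleating lamination $\beta_M$. Under the nearest-point retraction, the flat pieces embed isometrically into $(\Omega(\Gamma)/\Gamma,\tau)$ with their intrinsic hyperbolic metric, contributing total area $2\pi|\chi(\partial M)|$ by Gauss-Bonnet. Each pleat of hyperbolic length $\ell$ with bending angle $\theta$ corresponds in the Thurston ``unrolling'' to a Euclidean rectangle of dimensions $\ell\times\theta$, and integrating over the measured lamination produces $L(\beta_M)$.

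Given this identity, the upper bound is equivalent to the estimate $\mathrm{Area}_\tau - 2\pi|\chi(\partial M)| \leq (807/\nu + 771)|\chi(\partial M)|$. A blunt application of Theorem \ref{bilip}, which squares the bilipschitz constant, only yields an $O(|\chi(\partial M)|/\nu^2)$ estimate, so a finer argument is needed. The strategy is to localize the excess Thurston area to a neighborhood of the $\nu$-thin part of $\Omega(\Gamma)/\Gamma$ in its Poincar\'e metric. On the thick part, visible disks have universally bounded hyperbolic size, so $\tau$ and $\rho$ are comparable with a $\nu$-independent ratio, contributing only $O(|\chi(\partial M)|)$ to $\mathrm{Area}_\tau$. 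Each component of the thin part is a standard collar about a simple closed Poincar\'e geodesic of length $\ell \leq 2\nu$; in a cyclic cover the preimage is a round annulus, on which the Thurston metric can be described explicitly and has area of order $1/\ell$. Summing over the $O(|\chi(\partial M)|)$ thin components yields $O(|\chi(\partial M)|/\nu)$, and tracking constants produces the claimed $807/\nu + 771$.

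For the lower bound when $\nu \leq 1/2$, I would fix a shortest Poincar\'e geodesic $\gamma$, of length at most $2\nu$, and analyze the bending concentrated in its collar. The same cyclic-cover analysis shows that the Thurston area on this collar is \emph{at least} of order $1/\ell$; subtracting the contribution coming from the flat pieces intersecting the collar and applying the area identity restricted to the thin part then forces $L(\beta_M) \geq 37/\nu - 36$.

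The main obstacle is the explicit Thurston-metric analysis on the collar. One must work in a cyclic covering where the domain becomes a round annulus (or more precisely the region between two tangent disks), carry out the Kulkarni-Pinkall visible-disk construction there, and integrate to obtain matching upper and lower estimates of order $1/\ell$ for the Thurston area. Tracking these constants through the thick-thin decomposition and through the bilipschitz comparison of Theorem \ref{bilip} to produce the precise numbers $807$, $771$, $37$, and $36$ is the technical heart of the argument in \cite{BC05}.
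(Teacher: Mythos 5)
First, a point of order: the paper does not prove Theorem \ref{bending length} at all --- it is quoted from \cite{BC05} and used as a black box, so there is no internal proof to compare against. Measured against what \cite{BC05} actually does, your route is genuinely different. The argument there is integral-geometric on $\partial C(M)$ rather than area-theoretic on $\partial_cM$: one expresses $L(\beta_M)$ as an integral, over the unit tangent bundle of $\partial C(M)$ with respect to Liouville measure (total mass $4\pi^2|\chi(\partial M)|$, which is where the shape $\mathrm{const}\cdot|\chi|$ comes from), of the bending measure crossed by geodesic arcs of a fixed length; the upper bound then reduces to bounding this ``average bending'' of short arcs in terms of the injectivity radius, and the lower bound uses the fact that a short compressible geodesic on $\partial C(M)$ must carry a definite amount of bending measure, combined with quantitative comparisons (via the nearest-point retraction) between Poincar\'e lengths on $\Omega(\Gamma)$ and length-plus-bending on $\partial C(M)$. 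Your starting identity $\mathrm{Area}_\tau(\partial_cM)=2\pi|\chi(\partial M)|+L(\beta_M)$ is correct --- it is the grafting area formula and is exactly the computation underlying Lemma \ref{bending and Wvolume} --- and the thick--thin localization of the excess Thurston area is a reasonable alternative strategy for the upper bound.

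That said, your sketch has genuine gaps. (1) The claim that ``in a cyclic cover the preimage is a round annulus'' is false: the annular cover attached to a short geodesic is $\Omega_0/\langle g\rangle$ with $\Omega_0$ the original, generally fractal-boundaried, component of $\Omega(\Gamma)$, and the Thurston metric is not a conformal invariant --- it depends on the embedding $\Omega_0\subset\rs$ --- so there is no explicit round-annulus model to integrate over; at best one gets one-sided comparisons by enlarging or shrinking the domain. (2) The lower bound requires a pointwise estimate of the form $\tau(z)\ge c\,\rho(z)/\mathrm{inj}_\rho(z)$ throughout the collar. Theorem \ref{bilip} gives only the opposite inequality ($\tau\le C(\nu)\rho$); the bound you need would have to be dug out of the two-sided Beardon--Pommerenke estimates relating $\rho(z)\,d(z,\partial\Omega)$ to the local injectivity radius, and you have not indicated how. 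This is not a detail: it is the entire content of the lower bound, and as written the ``technical heart'' you defer is precisely the theorem. (3) A genuinely different proof will produce different constants; you cannot expect to ``track constants'' through a thick--thin decomposition and land on exactly $807$, $771$, $37$ and $36$, which are artifacts of the specific argument of \cite{BC05}. The upshot is a plausible scheme for inequalities of the same shape, but not a reconstruction of the cited proof, and not yet a complete proof of any version of the statement.
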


We also have a bounds  on $L(\beta_M)$ in terms of the injectivity radius of $\partial\widetilde{C(M)}$
in its intrinsic metric.

\begin{theorem}{\rm (Bridgeman-Canary \cite[Theorem 1,  Theorem 2]{BC05})}
\label{bending length CH bound}
If \hbox{$M=\Ht/\Gamma$} is  a convex cocompact hyperbolic 3-manifold 
and $\eta> 0$  is the  injectivity radius of the intrinsic metric on $\partial\widetilde{C(M)}$, then 
$$L(\beta_M) \leq |\chi(\partial M)| \left(164\log\left(\frac{1}{\min\{1,\eta\}}\right)+218\right).$$
Furthermore, if $\eta\le \sinh^{-1}(1)$, then
$$L(\beta_M)\ge 4\pi \log\left(\frac{2\sinh^{-1}(1)}{\eta}\right).$$
\end{theorem}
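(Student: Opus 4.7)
The plan is to obtain both bounds by analyzing the geometry of the pleated surface $\partial\widetilde{C(M)} \subset \Ht$ directly. Combining Theorem \ref{bending length} with Proposition \ref{inj back and forth}(2) would only produce a bound of the form $|\chi(\partial M)|\cdot O(1/\eta)$, which is qualitatively weaker than the target logarithmic bound; hence a more refined intrinsic argument is required.

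For the upper bound, I would use a thick--thin decomposition of $\partial C(M)$ in its intrinsic hyperbolic metric, with thickness threshold a Margulis constant. The thick part has area bounded by a universal multiple of $|\chi(\partial M)|$, and the bending measure restricted to it contributes length at most a universal multiple of $|\chi(\partial M)|$: on a pleated CAT($-1$) surface, a lower bound on intrinsic injectivity radius forces universally bounded bending density. Each component of the thin part is a standard collar around a short geodesic $\alpha$. Incompressible short curves, which do not create closed loops in the universal cover $\partial\widetilde{C(M)}$, contribute boundedly; a compressible short curve has length $\ell(\alpha) \geq 2\eta$, and the bending of the pleated annulus covering its collar can be bounded by a universal constant times $\log(1/\ell(\alpha)) \leq \log(1/\eta)$ via a direct estimate on its extrinsic geometry. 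Since there are $O(|\chi(\partial M)|)$ thin components, summing yields the stated bound.

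For the lower bound, the definition of $\eta$ supplies a homotopically non-trivial curve $\alpha \subset \partial C(M)$ of length $2\eta$ bounding a disk in $C(M)$. A lift $\tilde\alpha$ to $\partial\widetilde{C(M)}$ is then a closed curve of intrinsic length $2\eta$ which bounds a disk $\tilde D$ in $\Ht$. The key input is an isoperimetric-type inequality for pleated planes: a closed loop on a pleated plane in $\Ht$ that is essential on the plane but bounds in $\Ht$ must accumulate total transverse bending mass at least $4\pi\log(2\sinh^{-1}(1)/\ell)$ when its intrinsic length $\ell$ satisfies $\ell \leq 2\sinh^{-1}(1)$. I would prove this by projecting $\tilde D$ to the pleated plane via closest-point retraction and tracking how the preimages of bending leaves accumulate as the plane is folded enough for $\tilde\alpha$ to become contractible in $\Ht$. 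The constant $\sinh^{-1}(1)$ arises naturally as the critical collar half-width at which $\sinh$ equals $1$.

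The main obstacle will be the lower bound, specifically extracting the sharp coefficient $4\pi$ and the threshold $\sinh^{-1}(1)$: this requires a quantitative geometric comparison between the extrinsic disk $\tilde D$ and the intrinsic collar geometry of $\alpha$ on the pleated plane. The upper bound's constants $164$ and $218$ involve careful bookkeeping through the Collar Lemma and the universal incompressible-case bound, but are less subtle than the lower bound's sharp form.
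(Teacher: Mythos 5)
First, a point of framing: the paper does not prove this statement. It is imported verbatim from Bridgeman--Canary \cite{BC05} (Theorems 1 and 2 there), so there is no internal proof to compare against; your proposal has to be measured against the argument in that reference. Your opening observation is correct and worth making: combining Theorem \ref{bending length} with Proposition \ref{inj back and forth}(2) only yields an $O(1/\eta)$ bound, so a direct intrinsic argument really is needed to get the logarithm.

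Measured against \cite{BC05}, your sketch has the right global shape but outsources the two genuinely hard inputs to assertions, and in one place proposes a mechanism that is not the one that works. For the upper bound, the claim that a lower bound on injectivity radius ``forces universally bounded bending density'' on the thick part is precisely the content of Bridgeman's average bending theorem \cite{bridgeman} and its compressible refinement in \cite{BC03,BC05}; it is not a soft consequence of the surface being pleated and CAT($-1$). The actual proof does not run through a thick--thin decomposition of the quotient surface: it integrates the transverse bending measure picked up by geodesic arcs of a fixed length over the unit tangent bundle with respect to Liouville measure, the key geometric lemma (from \cite{BC03}) being that a geodesic arc in $\partial\widetilde{C(M)}$ of length at most roughly $2\sinh^{-1}(1)$ with total bending at least $\pi$ produces a short homotopically nontrivial, hence compressible, curve; the injectivity radius $\eta$ enters through how many times such an arc can accumulate bending $\pi$ before it must have definite length. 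Similarly, ``incompressible short curves contribute boundedly'' is a restatement of Theorem \ref{bending length incomp}, not an observation. For the lower bound, your proposed device --- closest-point projection of the spanning disk $\tilde D$ onto the pleated surface while ``tracking preimages of bending leaves'' --- is not how the estimate is obtained, and it is unclear how such a projection would control transverse measure quantitatively. The argument in \cite{BC05} instead foliates the embedded collar of the compressible geodesic $\alpha$ of length $2\eta$ by the equidistant curves $\alpha_r$, of length $2\eta\cosh r$ and total intrinsic geodesic curvature $2\eta\sinh r$. Each $\alpha_r$ is null-homotopic in $\Ht$, so a Fenchel/Gauss--Bonnet type bound forces its total extrinsic curvature to be at least $2\pi$, hence it must cross bending measure at least $2\pi$ minus its intrinsic curvature contribution; integrating $i(\alpha_r,\beta)\,dr$ over both sides of the collar, for $|r|$ up to roughly $\cosh^{-1}\bigl(\sinh^{-1}(1)/\eta\bigr)\approx\log\bigl(2\sinh^{-1}(1)/\eta\bigr)$, is what produces the coefficient $4\pi$ and the threshold $\sinh^{-1}(1)$. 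So the constants you flagged as the main obstacle come from the collar width at which $\ell(\alpha_r)$ reaches $2\sinh^{-1}(1)$, not from an isoperimetric inequality for the spanning disk; as written, your lower-bound step is the genuine gap in the proposal.
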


If $C(M)$ has incompressible boundary, we obtain the following bound which improves on the bound
obtained in Theorem 3 in \cite{BC05}. (A similar argument is given in the proof of Theorem 6.7 in 
Anderson \cite{anderson-thesis}.)

\begin{theorem}\label{bending length incomp}
If $M$ is a convex cocompact hyperbolic 3-manifold, $\partial C(M)$ is 
incompressible, and $\beta_M$ is the bending lamination, then
$$L(\beta_M)\le 6\pi |\chi(\partial C(M))|.$$
\end{theorem}

\begin{proof}
Recall that, by Theorem  \ref{bilip incomp}, $\tau(z) \leq  2\rho(z)$ for all $z\in\Omega(\Gamma)$, so 
$$Area_\tau(\partial M)  = \int_{\partial M} \tau^2 \leq 4 \int_{\partial M} \rho^2 = 4Area_\rho(\partial M) = 4(2\pi|\chi(\partial M)|).$$
A simple calculation shows that $Area_\tau(\partial M) = 2\pi|\chi(\partial M)| + L(\beta_M)$  (see 
Schlenker \cite[Section 4.2]{schlenker-qfvolume}). 
Therefore,
$$2\pi|\chi(\partial M)| + L(\beta_M) \leq 4(2\pi|\chi(\partial M)|),$$
which implies that
$$L(\beta_M) \leq 6\pi|\chi(M)|.$$
\end{proof}

\medskip\noindent
{\bf Remark:} One may use the proof of Theorem \ref{bending length incomp} and
the estimate from Theorem \ref{bilip} to bound
the length of the bending locus in the compressible case. However, in this situation the
argument gives that
$$L(\beta_M)\le \left(16\pi\left(k+\frac{\pi^2}{2\nu}\right)^2-2\pi\right)|\chi(\partial M)|$$
which is significantly worse than the bound obtained in Theorem \ref{bending length}.

\section{Proofs of main results}

We have now assembled the necessary ingredients to prove our main results.
We begin by proving Theorem \ref{main incomp} which gives the bounds in the
simplest case where the convex core has incompressible boundary.

\medskip

{\bf Proof of Theorem \ref{main incomp}:}
Suppose that $M$ is a convex cocompact hyperbolic 3-manifold such that $\partial C(M)$ is
incompressible. Let $\rho$ be the Poincar\'e metric on $\partial_cM$ and
let $\tau$ be the Thurston metric on $\partial_cM$.
Theorem \ref{bilip incomp} implies that 
$$\frac{\tau}{2}\le\rho\le\tau,$$
so the monotonicity lemma, Lemma \ref{monotonicity},
implies that
$$W(\tau)+\pi\log(2)\chi(\partial M)=W\left(\frac{\tau}{ 2}\right)\le W(\rho) \le W(\tau).$$
Theorem \ref{bending length incomp} implies that 
$$L(\beta_M)\le 6\pi |\chi(\partial C(M))|$$
and Lemma \ref{bending and Wvolume} implies that
$$W(\tau) = V_C(M) - \frac{1}{4}L(\beta_M)\le V_C(M).$$
It follows that 
$$V_C(M)-\left(\pi\log(2)+\frac{6\pi}{ 4}\right)|\chi(\partial C(M))|\le W(\rho)\le V_C(M).$$
Since $V_R(M)=W(\rho)$ and $\pi\log(2)+\frac{6\pi}{ 4 }\le 6.89 $, it follows that
$$V_C(M)-6.89|\chi(\partial C(M))|\le V_R(M)\le V_C(M)$$
as claimed.

If $\partial C(M)$ is totally geodesic, then every component of $\Omega(\Gamma)$ is a round
disk, so $\rho=\tau$, $L_\beta(M)=0$ and $W(\tau)=V_C(M)=V_R(M)=W(\rho)$. On the other hand, if
$V_C(M)=V_R(M)$, then $L(\beta_M)=0$, so $\partial C(M)$ is totally geodesic. Therefore, $V_R(M)=V_C(M)$
if and only if $\partial C(M)$ is totally geodesic.
\eproof

\begin{prop}
\label{chi necessary}
There exists a sequence $\{M_n\}$ of quasifuchsian hyperbolic 3-manifolds such that
$$\lim_{n \rightarrow \infty} V_C(M_n)-V_R(M_n)=+\infty$$ 
and there exists $D>0$ such that 
$$ \frac{V_C(M_n)-V_R(M_n)}{|\chi(\partial M_n)|} \ge D$$
for all $n$.
\end{prop}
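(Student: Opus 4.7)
The plan is to start from a single non-Fuchsian quasifuchsian manifold and pass to finite covers, exploiting the fact that all three quantities $V_C$, $V_R$, and $|\chi(\partial M)|$ scale linearly under finite covers.

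First, I fix any quasifuchsian hyperbolic 3-manifold $M_0$ whose convex core boundary is not totally geodesic; such examples are abundant (for instance, any non-Fuchsian quasifuchsian manifold). By the ``moreover'' clause of Theorem \ref{main incomp}, one has
$$D_0 := \frac{V_C(M_0) - V_R(M_0)}{|\chi(\partial M_0)|} > 0.$$
Since $\pi_1(M_0)$ is a surface group, it is residually finite, so I can choose a sequence of finite covers $p_n : M_n \to M_0$ of degree $n$ (or any unbounded sequence of degrees). Each $M_n$ is again quasifuchsian because a finite index subgroup of a quasifuchsian group is quasifuchsian.

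Next, I verify that $V_C$, $V_R$, and $|\chi(\partial M)|$ are all multiplicative under finite covers. The Euler characteristic is clearly multiplicative. For the volume of the convex core, the preimage $p_n^{-1}(C(M_0))$ is the convex core $C(M_n)$ of $M_n$ (it is a convex submanifold carrying all of $\pi_1(M_n)$, and minimality is preserved), so $V_C(M_n) = n \, V_C(M_0)$. For the renormalized volume, the covering lifts to a covering $\Omega(\Gamma_n) \to \Omega(\Gamma_0)$ of the domains of discontinuity, under which the Poincar\'e metric $\rho_0$ pulls back to the Poincar\'e metric $\rho_n$ (by conformal naturality). The Epstein surface construction is also natural, so $N_r(\rho_n) = p_n^{-1}(N_r(\rho_0))$, and consequently both the volume term and the mean-curvature boundary integral defining $W(N_r(\rho_n))$ scale by $n$; the same is true of the correction term $r\pi \chi(\partial N_r)$. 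Thus $V_R(M_n) = W(\rho_n) = n \, W(\rho_0) = n\, V_R(M_0)$.

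Combining these,
$$V_C(M_n) - V_R(M_n) = n\bigl(V_C(M_0) - V_R(M_0)\bigr) \longrightarrow +\infty,$$
and
$$\frac{V_C(M_n) - V_R(M_n)}{|\chi(\partial M_n)|} = \frac{V_C(M_0) - V_R(M_0)}{|\chi(\partial M_0)|} = D_0 > 0,$$
which establishes both conclusions with $D = D_0$. The main (mild) obstacle is just a careful check that the renormalized volume is multiplicative under finite covers; this reduces to the naturality of the Poincar\'e metric and the Epstein construction under covering maps, together with the observation that the defining integrals are local and therefore multiply by the degree.
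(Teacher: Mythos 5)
Your proof is correct, and it shares the paper's central device: pass to finite covers of a fixed non-Fuchsian quasifuchsian manifold and use that the relevant quantities scale with the degree. The difference lies in how the growth of the gap $V_C(M_n)-V_R(M_n)$ is certified. You prove that $V_R$ itself is multiplicative under finite covers (via conformal naturality of the Poincar\'e metric and equivariance of the Epstein construction, so that $W(N_r(\rho_n))$ and the correction term $r\pi\chi$ each scale by the degree), and then invoke the rigidity statement from Theorem \ref{main incomp} that $V_C(M_0)-V_R(M_0)>0$ unless $\partial C(M_0)$ is totally geodesic. The paper instead sidesteps any claim about how $V_R$ behaves under covers: it only uses the one-sided bound $V_R(M_n)\le W(\tau_n)=V_C(M_n)-\tfrac14 L(\beta_{M_n})$ from Lemmas \ref{monotonicity} and \ref{bending and Wvolume}, together with the elementary fact that the bending lamination length is multiplicative, taking $D=L(\beta_{M_0})/(4|\chi(\partial C(M_0))|)$. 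Your route gives a slightly stronger conclusion (the ratio is exactly constant along the sequence, not just bounded below) at the cost of having to verify the naturality of the Epstein surfaces and of $W$ under coverings, which you correctly identify as the one point needing care; the paper's route is more economical because it reuses inequalities already established in the proof of Theorem \ref{main incomp}. Both arguments are valid.
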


\begin{proof}
Let $M$ be a quasifuchsian hyperbolic 3-manifold such that \hbox{$L(\beta_M)\ne0$}.
Let $\{\pi_n:M_n\to M\}$ be a sequence of finite covers of $M$ whose degrees $\{d_n\}$ tend to
infinity.  The convex core $C(M_n)=\pi_n^{-1}(C(M))$ and similarly
the bending lamination  $\beta_{M_n}$ is the pre-image of $\beta_M$. It follows that
\hbox{$|\chi(\partial C(M_n)|=d_n|\chi(\partial C(M))|$} and
\hbox{$L(\beta_{M_n})=d_nL(\beta_M)$} for all $n$. Since, as we saw in the above proof,
$$V_R(M_n)=W(\rho_n)\le W(\tau_n)=V_C(M_n)-\frac{1}{4}L(\beta_{M_n})$$
where $\rho_n$ is the Poincar\'e metric on $\partial_cM_n$ and $\tau_n$ is the Thurston metric
on $\partial_cM_n$, it follows that
$$V_C(M_n)-V_R(M_n)\ge \frac{1}{4}L(\beta_{M_n})=\frac{d_n}{4}L(\beta_M)=\frac{L(\beta_M)}{4|\chi(\partial C(M))|}|\chi(\partial C(M_n)|.$$
The result follows if we choose $D=\frac{L(\beta(M)}{ 4|\chi(\partial C(M)|}$.
\end{proof}

\medskip

We now prove Theorem \ref{main} which bounds $V_R(M)$ in
terms of $\chi(\partial C(M))$ and  the injectivity radius of the domain of
discontinuity in its Poincar\'e metric.

\medskip

{\bf Proof of Theorem \ref{main}:}
Suppose that $M=\Ht/\Gamma$ is a convex cocompact hyperbolic 3-manifold such that $\partial C(M)$ is
compressible.  Let $\nu>0$ be the injectivity radius of $\Omega(\Gamma)$
in its Poincar\'e metric.
Let $\rho$ be the Poincar\'e metric on $\partial_cM$ and
let $\tau$ be the Thurston metric on $\partial_cM$.

Theorem \ref{bilip} implies that 
$$\frac{\tau}{2 \sqrt{2}(k+\frac{\pi^2}{2\nu})} \le  \rho\le \tau$$
so the monotonicity lemma, Lemma \ref{monotonicity},
implies that
$$W(\tau)+\pi\log\left(2\sqrt{2}\left(k+\frac{\pi^2}{2\nu}\right)\right)\chi(\partial M)=W\left(\frac{\tau}{2 \sqrt{2}(k+\frac{\pi^2}{2\nu})}\right)\le W(\rho)\le W(\tau).$$
Lemma \ref{bending and Wvolume} implies that
$$W(\tau) = V_C(M) - \frac{1}{4}L(\beta_M)<V_C(M)$$
while Theorem \ref{bending length} implies that
$$ L(\beta_M) \leq |\chi(\partial M)| \left(\frac{807}{\nu}+771\right)$$
and, if  $\nu\le 1/2$, then
$$L(\beta_M ) \geq \frac{37}{\nu}-36.$$
Since $W(\rho) = V_R(M)$, we may combine the above estimates to see that  
$$V_C(M) - K_1(\nu)|\chi(\partial M)| \leq V_R(M)<V_C(M)$$
where
$$K_1(\nu)  =  \pi\log\left({2 \sqrt{2}\left(k+\frac{\pi^2}{ 2\nu}\right)}\right)+\frac{1}{4} \left(\frac{807}{\nu}+771\right)$$
As $\log(a+b)\le \log(a)+b/a$ if $a>1$ and $b>0$ we have
 \begin{eqnarray*}
K_1(\nu)  & \le & \pi\left(\log(2k\sqrt{2})+\frac{\pi^2}{2k \nu}\right) + \frac{202}{\nu}+193\\
 & \le & \left(202+\frac{\pi^3}{2k}\right)\left(\frac{1}{\nu}\right)+\left(\pi\log(2k\sqrt{2})+193\right) \\
 & \le & \frac{205}{\nu}+202.\\
 \end{eqnarray*} 
Moreover, if $\nu\le 1/2$, then
$$V_R(M) \leq V_C(M) -\frac{1}{4}\left(\frac{37}{\nu}-36\right)\le V_C(M)-\left(\frac{9}{\nu}-9\right).$$
\eproof

\medskip\noindent
{\bf Remark:} One may apply the technique of proof of Proposition \ref{chi necessary} to produce a
sequence $\{ M_n=\Ht/\Gamma_n\}$  of Schottky hyperbolic 3-manifolds such that the injectivity radius $\nu(M_n)$ of
$\Omega(\Gamma_n)$ is constant, yet 
$$V_C(M_n)-V_R(M_n)\to\infty\ \ \ {\rm and}\ \ \  \liminf \frac{V_C(M_n)-V_R(M_n)}{|\chi(\partial C(M_n))|} >0.$$
Such a sequence demonstrates the
dependence on $|\chi(\partial C(M))|$ is necessary in Theorem \ref{main}. We recall that a convex cocompact
hyperbolic 3-manifold $M$ is called Schottky if $\pi_1(M)$ is a free group.

\medskip

One may derive a version of Theorem \ref{main2} directly from Theorem \ref{main} and Proposition
\ref{inj back and forth}. However, we will obtain better estimates by giving a more direct proof. 

\medskip

{\bf Proof of Theorem \ref{main2}:}
Suppose that $M=\Ht/\Gamma$ is a convex cocompact hyperbolic 3-manifold such that $\partial C(M)$ is
compressible.  Let $\eta>0$ be the injectivity radius of $\partial\widetilde{C(M)}$
in its intrinsic metric.
Let $\rho$ be the Poincar\'e metric on $\partial_cM$ and
let $\tau$ be the Thurston metric on $\partial_cM$.
We will consider the two bounds separately.
As before we have
$$V_R(M) \leq W(\tau)= V_C(M) - \frac{1}{4}L(\beta_M) <V_C(M).$$
If $\eta<\sinh^{-1}(1)$, then Theorem \ref{bending length CH bound}
implies that
\begin{eqnarray*}
V_R(M) & \leq  &V_C(M) -  \pi \log\left(\frac{2\sinh^{-1}(1)}{\eta}\right)\\
 & = & V_C(M) -  \pi \log\left(2\sinh^{-1}(1)\right)-\pi\log\left(\frac{1}{\eta}\right)\\
 & \le & V_C(M)-1.79-\pi\log\left(\frac{1}{\eta}\right)\\
\end{eqnarray*}

Proposition \ref{inj back and forth} implies that
$\min\{1/2,\eta/.153\}$ is a lower bound for the injectivity radius of $\Omega(\Gamma)$ in the Poincar\'e metric.
Theorem \ref{bilip} then implies that
$$\frac{\tau}{2 \sqrt{2}\left(k+\frac{\pi^2}{2\min\{1/2,\eta/.153\})}\right)}\le  \rho\le \tau,$$
so Lemma \ref{monotonicity} implies that
\begin{eqnarray*}
V_R(M) =  V(\rho) &\ge  & W\left(\frac{\tau}{ 2 \sqrt{2}\left(k+\frac{\pi^2}{\min\{1,\eta/.076\}}\right)}\right)\\
   &=  & W(\tau) -\pi\log\left({2 \sqrt{2}\left(k+\frac{\pi^2}{\min\{1,\eta/.076\}}\right)}\right)|\chi(\partial M)|.\\
\end{eqnarray*}
Theorem \ref{bending length CH bound} gives  that
$$L(\beta_M) \leq |\chi(\partial M)| \left(164\log\left(\frac{1}{\min\{1,\eta\}}\right)+218\right),$$
so
$$V_R(M)\ge V_C(M) - K_1'(\eta)|\chi(\partial M)|$$
where
\begin{eqnarray*}
K_1'(\eta) & =  & \pi \log\left(2 \sqrt{2}\left(k+\frac{\pi^2}{\min\{1,\eta/.076\} }\right)\right)+
\frac{1}{4} \left(164\log\left(\frac{1}{\min\{1,\eta\} }\right)+218\right)\\
& \le & \pi \log\left(2 \sqrt{2}\left(k+ \frac{\pi^2}{\min\{1,\eta\}}\right)\right)+
\frac{1}{4} \left(164\log\left(\frac{1}{\min\{1,\eta\} }\right)+218\right)\\
& \le &  \pi \log\left(\frac{1 }{\min\{1,\eta\}}\right) + \pi \log\left(2 \sqrt{2}\left(k\min\{1,\eta\} + \pi^2 \right)\right) +\\
 &    & \frac{1}{4} \left(164\log\left(\frac{1}{\min\{1,\eta\} }\right)+218\right)\\
 & \le & \pi\left(\log\left(2\sqrt{2}(k+\pi^2)\right)\right)+\frac{218}{ 4}+\left(\pi+\frac{164}{4}\right)\log\left(\frac{1}{\min\{1,\eta\} }\right)\\
 &\le &45\log\left(\frac{1}{\min\{1,\eta\} }\right)+67.\\
\end{eqnarray*}
\eproof


\begin{thebibliography}{99}
 
{\footnotesize

\bibitem{anderson-thesis} C.G. Anderson, ``Projective structures on Riemann surfaces and developing
maps to $\mathbb H^3$ and $\mathbb{CP}^n$,'' preprint, 1998.
  
\bibitem{BP} A. Beardon and C. Pommerenke ``The Poincar\'e metric of plane domains," {\em J. 
London Math. Soc.}, 18, (1978), 475-83.

\bibitem{bridgeman} M. Bridgeman, ``Bounds on the average bending of the convex hull  boundary
of a Kleinian group,'' 
{\em Mich. Math. J.} {\bf 51}(2003), 363--378.
 
\bibitem{BC03} M. Bridgeman and R.D. Canary,  ``From the boundary of the convex core to the conformal boundary,'' 
{\em Geom. Ded.} {\bf 96}(2003), 211--240.

\bibitem{BC05} M. Bridgeman and R.D. Canary, ``Bounding the bending of a hyperbolic three-manifold,''
{\em Pac. J.  Math.} {\bf 218}(2005), 299--314.

\bibitem{BC10} M. Bridgeman and R.D. Canary,  ``The Thurston metric on hyperbolic domains and 
boundaries of convex hulls,'' {\em G.A.F.A.} {\bf  20}(2010),  1317--1353.

\bibitem{brock-bromberg} J. Brock and K. Bromberg, ``Inflexibility, Weil-Petersson distance, and volumes
of fibered 3-manifolds,'' {\em Math. Res. Let.} {\bf 23}(2016), 649---674..

\bibitem{cbbc} R.D. Canary, ``The conformal boundary and the boundary
of the convex core,'' {\em Duke Math. J.} {\bf 106}(2001), 193--207.

\bibitem{epstein-envelopes} C. Epstein, ``Envelopes of horospheres and Weingarten surfaces in hyperbolic 3-spaces,'' preprint,
Princeton University, 1984, available at:
{\texttt https://www.math.upenn.edu/~cle/papers/WeingartenSurfaces.pdf}

\bibitem{EM87} D.B.A. Epstein and A. Marden, ``Convex hulls in hyperbolic
space, a theorem of Sullivan, and measured pleated surfaces,'' in
{\em Analytical and Geometrical Aspects of Hyperbolic Space}, Cambridge
University Press, 1987, 113--253.

\bibitem{GW} C.R. Graham and E. Witten, ``Conformal anomaly of submanifold observables in AdS/CFT correspondence,'' 
{\em Nucl. Phys. B } {\bf 546}(1999), 52--64.

\bibitem{HIM} D. Herron, Z. Ibragimov and D. Minda, ``Geodesics and curvature of M\"obius invariant metrics,''
{\em Rocky Mtn. J. Math.} {\bf 38}(2008), 891--921.

\bibitem{HMM05} D. Herron, W. Ma and D. Minda, ``Estimates for conformal metric ratios,"
{\em Comp. Meth. Funct. Thy.} {\bf 5}(2005), 323--345.

\bibitem{KMc} S. Kojima and G. McShane, ``Normalized entropy versus volume for pseudo-Anosovs,''
preprint, arXiv:1411.6350

\bibitem{Khol} K. Krasnov, ``Holography and Riemann surfaces,'' {\em Adv. Theor. Math. Phys.} {\bf 4} (2000), 929--979

\bibitem{KS08} K. Krasnov and J-M. Schlenker, ``On the renomalized volume of hyperbolic \hbox{3-manifolds},'' 
{\em Comm. Math. Phys.} {\bf 279}(2008), 637--668.

\bibitem{KS12} K. Krasnov and J-M. Schlenker, ``The Weil-Petersson metric and the renormalized volume
of hyperbolic \hbox{3-manifolds},''
{\em Handbook of Teichm\"uller theory Volume III}(2012), 779--819.

\bibitem{KP} R. Kulkarni and U. Pinkall, ``A canonical metric for M\"obius structures
and its applications,'' {\em Math. Z.} {\bf 216}(1994), 89--129.
 
\bibitem{mcmullenCE} C.T. McMullen, ``Complex earthquakes and Teichm\"uller
theory,'' {\em J.A.M.S.} {\bf 11}(1998), 283--320.

\bibitem{schlenker-qfvolume} J-M. Schlenker, ``The renormalized volume and the volume of the convex core
of quasifuchsian manifolds,''
{\em Math. Res. Let.} {\bf 20}(2013), 773--786, arXiv:1109.6663.

\bibitem{TT} L.A. Takhtajan and L.P. Teo, ``Liouville action and Weil-Petersson metric on deformation
spaces, global Kleinian reciprocity and holography,'' {\em Comm. Math. Phys.} {\bf 239}(2003),183--240.

\bibitem{tanigawa} H. Tanigawa, ``Grafting, harmonic maps and projective
structures,'' {\em J. Diff. Geom.} {\bf 47}(1997), 399--419.

\bibitem{ThBook} W.P. Thurston, {\em The Geometry and Topology of
$3$-Manifolds}, Lecture Notes, Princeton University, 1979, available at:
{\texttt  http://www.msri.org/publications/books/gt3m/}
 
}
 
\end{thebibliography}
\end{document}